\theoremstyle{plain}
\newtheorem{theorem}{Theorem}
\newtheorem{corollary}{Corollary}
\newtheorem{lemma}{Lemma}
\newtheorem{proposition}{Proposition}
\theoremstyle{definition}
\newtheorem{example}{Example}
\newtheorem{remark}{Remark}
\numberwithin{theorem}{section}
\numberwithin{corollary}{section}
\numberwithin{lemma}{section}
\numberwithin{definition}{section}
\numberwithin{example}{section}
\numberwithin{remark}{section}
\numberwithin{proposition}{section}
\renewcommand{\leq}{\leqslant}
\renewcommand{\geq}{\geqslant}
\newcommand{\1}{\mathds 1}
\def\<{\left\langle}
\def\>{\right\rangle}
\newcommand{\dd}[1]{\bar{#1}}
\def \ff{\xi}
\def \pN{\mathcal{N}}
\def \NN{\mathbb{N}}
\def \sp{p^*}
\def \tp{\tilde{p}}
\def \tP{\tilde{P}}
\def \Ps{P^*}
\def \RR{\mathbb{R}}
\def \Rd{{\RR^d}}
\definecolor{kb}{rgb}{0.1,0.5,0.1}
\definecolor{yk}{rgb}{0.1,0.2,0.7}
\definecolor{ks}{rgb}{0.7,0.1,0.2}
\begin{document}

\title[Majorization and 4G Theorem]{
Majorization, 4G Theorem
and Schr\"odinger perturbations
}

\subjclass[2010]{Primary 47D06, 47D08; Secondary 35A08, 35B25}

\keywords{4G inequality, Schr\"odinger perturbation, subordinator, fundamental solution}

\thanks{Krzysztof Bogdan was partially supported by NCN grant 2012/07/B/ST1/03356. Karol Szczypkowski was partially supported by NCN grant  2011/03/N/ST1/00607. } 

\author{Krzysztof Bogdan}
 \address{Wroc{\l}aw University of Technology,
Wybrze{\.z}e Wyspia{\'n}skiego 27,
50-370 Wroc{\l}aw, Poland}
\email{bogdan@pwr.edu.pl}

\author{Yana Butko}
\address{Bauman Moscow State Technical University, 105005, 2nd Baumanskaya str. 5, Moscow, Russia and   University of Saarland,  P.O. Box 15 11 50, D-66041 Saarbr\"{u}cken, Germany}
\email{yanabutko@yandex.ru, kinderknecht@math.uni-sb.de}

\author{Karol Szczypkowski}
\address{Universit{\"a}t Bielefeld, Postfach 10 01 31,
D-33501 Bielefeld, Germany  and
Wroc{\l}aw University of Technology,
Wybrze{\.z}e Wyspia{\'n}skiego 27,
50-370 Wroc{\l}aw, Poland
}
\email{karol.szczypkowski@math.uni-bielefeld.de, karol.szczypkowski@pwr.edu.pl}

\date{\today}

\maketitle

\begin{abstract}
Schr\"odinger perturbations of transition densities by singular potentials may fail to be comparable with the original transition density. For instance this is so for 
the transition density of a subordinator perturbed by any time-independent unbounded potential.
In order to estimate such perturbations it is convenient to use an auxilary transition density as a majorant 
and
the 4G inequality
for the original transition density and the majorant.
We prove the 4G inequality for the $1/2$-stable and inverse Gaussian subordinators, discuss the corresponding class of admissible potentials and  indicate estimates for the resulting transition densities of Schr\"odinger operators. 
The connection of the transition densities to their generators is made via the weak-type notion of fundamental solution.

\end{abstract}

\maketitle

\section{Introduction and Preliminaries}\label{sec:intro}

Schr\"odinger perturbation consists of adding to a given operator an operator of multiplication by a function $q$.
On the level of inverse operators the addition results in the perturbation series. We focus on transition densities $p$ perturbed by functions $q\ge 0$.
Our main goal is to give pointwise estimates for the resulting perturbation series $\tp$ under suitable integral conditions on $p$ and $q$. For instance, bounded potentials $q$ produce transition densities $\tp$ comparable with the original $p$ in finite time.
In a series of recent papers,  integral conditions leading to  comparability of $\tp$ and $p$  were proposed which allow for rather singular potentials $q$, if $p$ satisfies the 3G Theorem \cite{MR2457489, MR3000465}.
The integral conditions compare the second term in the perturbation series
(that which is linear in $q$) with $p$ (the first term of the series). The comparison is meant to prevent the instantaneous blowup and to control the long-time accumulation of mass. The first property 
may be secured by smallness conditions, like $0\le \eta<1$ below, and the second is accomplished by using a subadditive function $Q$. The results render $p$ an approximate majorant for $\tp$ in finite time \cite{MR3000465}. They may also be considered as analogues of the Gronwall inequality \cite{MR3065313}.
We note that similar estimates for Green-type kernels were recently obtained in \cite{Frazier:2014qsa}, \cite{MR2207878}, \cite{MR2403428}.

The 3G Theorem, which is related to the quasi-metric condition \cite{Frazier:2014qsa}, is common for transition densities with power-type decay, e.g., the transition density of the fractional Laplacian.
However, many transition densities fail to satisfy 3G, for instance
the Gaussian kernel.
In \cite{MR3200161} and \cite{MR3065313} a more flexible majorization technique is proposed, motivated by earlier results of \cite{MR1488344}. Namely, another transition density $p^*$ serves as an approximate majorant for the perturbation series.
Introducing $p^*$ is not merely a technical device: for unbounded $q$, $\tp$ may fail to be comparable with $p$ in finite time. 
Finding an appropriate $p^*$ is essentially tantamount to estimating $\tp$, cf. \eqref{def:coeNstriv}, and may be hard, but in some cases, including ours, it is sufficient to chose  $\sp$ being a {\it dilation} of $p$.  
The $p^*$ majorization technique further involves an integral smallness condition for $p$, $q$ and $p^*$, which is implied by the familiar Kato-type conditions if $p$ and $p^*$ satisfy the 4G inequality.

In this paper we prove a 4G inequality for the transition density $p$ of the inverse Gaussian subordinator, including the $1/2$-stable subordinator. We reveal a wide class of unbounded Schr\"odinger potentials admissible for this $p$, and estimate the Schr\"odinger perturbations series
for $p$ using the framework of \cite{MR3200161}.
We thus extend the scope of the $p^*$ majorization technique for Schr\"odinger perturbations, beyond the transition densities of diffusion processes discussed in \cite{MR3200161}. We expect 4G to be valid quite generally, but at present it is even open for the $\alpha$-stable subordinators with $\alpha\neq 1/2$.
We note that the methods of \cite{MR3000465}, which make assumptions on potentials $q$ in terms of {\it bridges} (see also \cite{MR2457489}), fail for unbounded $q$ in this case. 
Namely, if $p$ is the transition density of a subordinator and $q$ is time-independent and unbounded, then $p$ and $\tilde p$ are never comparable, 
as proved in Section~\ref{sec:Kato}.
The results explain why we propose 4G and the framework of \cite{MR3200161} as a viable general approach to Schr\"odinger perturbations of transition densities by unbounded functions $q$.

The structure of the paper is as follows. Below in this section we give notation and preliminaries.
In Section~\ref{sec:p12s} we present 4G inequality and applications to Kato-type perturbations for the $1/2$-stable subordinator and the inverse Gaussian subordinator.
In Section~\ref{sec:Kato} we discuss unbounded perturbations $q$ of general subordinators.
In Lemma~\ref{cor:fst} and in Section~\ref{sec:uniq}
we discuss the connection of the considered integral operators to generators,
with focus on L\'evy-type generators.
In Remark~\ref{r:sp} we indicate extensions of our results to the case of signed $q$.

Let $X$ be an arbitrary set with a $\sigma$-algebra $\mathcal{M}$ and a (non-negative) $\sigma$-finite measure $m$ defined on $\mathcal{M}$. To simplify the notation we write $dz$ for $m(dz)$ in what follows. We also consider the 
$\sigma$-algebra $\mathcal{B} $ of Borel subsets  of $\RR$, and the Lebesgue measure, $du$, defined on $\RR$. The {\it space-time}, $\RR\times X$, is equipped with the $\sigma$-algebra $\mathcal{B}\times\mathcal{M}$ and the product measure $du\,dz=du\,m(dz)$.
We consider a {\it measurable transition density} $p$ on space-time, i.e.,
we assume that $p: \RR\times X \times\RR\times X\to [0,\infty]$
is
$\mathcal{B}\times\mathcal{M}\times\mathcal{B}\times\mathcal{M}$-measurable
and the following  Chapman-Kolmogorov equations hold for all $x,y \in X$  and $s<u<t$:
\begin{align}\label{assume1}
\int_{X} p(s,x,u,z)p(u,z,t,y)\,dz = p(s,x,t,y)\,.
\end{align}
All the functions considered below are assumed measurable on their respective domains.
We consider a (nonnegative and $\mathcal{B}\times\mathcal{M}$-measurable) function $q\colon \RR \times X \to [0,\infty]$.
The Schr{\"o}dinger perturbation $\tilde p$ of $p$ by $q$ is defined as
\begin{equation}\label{def:tp}
\tp(s,x,t,y)=\sum_{n=0}^{\infty}p_n(s,x,t,y)\,,
\end{equation}
where $p_{0}(s,x,t,y)=p(s,x,t,y)$ and, for $n=1,2,\ldots$,
\begin{align}\label{def:p_n}
p_n(s,x,t,y)&=\int_s^t \int_{X} p(s,x,u,z)q(u,z) p_{n-1}(u,z,t,y)\,dzdu\,.
\end{align}
The above is an explicit method of constructing new transition densities. In particular, $\tp$ satisfies the Chapman-Kolmogorov equations \cite[Lemma~2]{MR2457489}.
From \eqref{def:p_n} and the perturbation series 
\eqref{def:tp}
we get the perturbation formula:
\begin{equation}\label{eq:pfv}
\tp(s,x,t,y)=p(s,x,t,y)+\int_s^t \int_{X} p(s,x,u,z)q(u,z) \tp(u,z,t,y)\,dzdu.
\end{equation}
We similarly get the following variant,
\begin{equation}\label{eq:pf}
\tp(s,x,t,y)=p(s,x,t,y)+\int_s^t \int_{X} \tp(s,x,u,z)q(u,z) p(u,z,t,y)\,dzdu.
\end{equation}
Since $q\geq 0$, we trivially have $\tp \geq p$, so we focus on the upper bounds for $\tp$. These may be obtained under suitable conditions on $p_1$.
In \cite{MR3000465} (see also \cite{MR2457489}, \cite{MR2507445} and \cite[Lemma 3.1]{MR1978999}), the authors assume that for all $s<t$, $x,y \in X$,
\begin{equation}\label{eq:kk_1}
\int_s^t\int_X p(s,x,u,z)q(u,z)p(u,z,t,y)dzdu\leq [\eta +Q(s,t)]p(s,x,t,y),
\end{equation}
where $0\le \eta<\infty$ and  $Q$ is superadditive: $0\le Q(s,u)+Q(u,t)\le Q(s,t)$. The following estimates follow: for all $s<t$, $x,y\in X$,
\begin{equation}\label{eq:metgKpt}
\tp (s,x,t,y)\leq p (s,x,t,y){\left(\frac{1}{1-\eta}\right)}^{1+Q(s,t)/\eta},
\end{equation}
provided  $0<\eta<1$, and for $\eta=0$ we even have
\begin{equation}\label{eq:gpte4}
\tilde{p}(s,x,t,y)\leq p (s,x,t,y)e^{Q(s,t)}\,.
\end{equation}
The condition \eqref{eq:kk_1} may be considered as property of relative boundedness of $q$, or Miyadera-type condition for bridges \cite{MR1811962,MR2457489}.
It is convenient to use
\eqref{eq:kk_1}, e.g.,
for the transition density of the isotropic $\alpha$-stable L\'evy process with $\alpha\in (0,2)$, because the so-called 3G inequality holds in this case:
$$
p(s,x,u,z)\wedge p(u,z,t,y)\le c \ p(s,x,t,y), \qquad s<u<t, \ x,y,z \in \Rd.
$$
3G simplifies the verification of \eqref{eq:kk_1} allowing for a simple description of the acceptable growth of $q$, cf. \cite[Corollary~11]{MR2457489}, \cite[Section~4]{MR3000465}.
In general, however, condition \eqref{eq:kk_1} may be troublesome. For instance, the transition density of the Brownian motion fails to satisfy 3G and \eqref{eq:kk_1} is difficult to characterize in a simpler way. Moreover, as we see  below, for some transition densities  \eqref{eq:kk_1} holds for $q(u, z)=q(z)$ (i.e. time independent $q$) only if $q$ is bounded.
This explains the need for modifications of \cite{MR3000465}.
The approach of \cite{MR3200161} is based on the assumption that for all $s<t$, $x,y\in X$,
\begin{equation}\label{def:coeNs}
\int_s^t \int_{X} p(s,x,u,z)q(u,z) \sp(u,z,t,y)\,dzdu \leq \big[ \eta + Q(s,t)\big] \sp(s,x,t,y)\,.
\end{equation}
Here it is furthermore assumed that $0\le \eta<\infty$, $Q(s,t)$ is superadditive,
right-continuous  in $s$ and left-continuous in $t$ (in short: regular superadditive), and $\sp$ is a (majorizing)
transition density, i.e., there is a constant $C\ge 1$ such that for all $s<t$ and  $x,y\in X$,
\begin{equation}\label{eq:pCtp}
p(s,x,t,y) \leq C \sp (s,x,t,y)\,.
\end{equation}
The above assumptions are abbreviated to $q \in \pN(p,\sp,C,\eta,Q)$. By \cite[Theorem~1.1]{MR3200161}, if $q \in \pN(p,\sp,C,\eta,Q)$ with $\eta<1$,  then for every $\varepsilon\in(0,1-\eta)$,
\begin{equation}\label{ineq:thm1a}
\tp (s,x,t,y)\leq \sp (s,x,t,y)\left( \frac{C}{1-\eta-\varepsilon}\right)^{1+\frac{Q(s,t)}{\varepsilon}}\,,
\quad s<t,\; x,y\in X\,.
\end{equation}
For instance $\sp(s,x,t,y)=p(s/c,x,t/c,y)=c^d p(cs,cx,ct,cy)$ with $c\in (0,1)$ is convenient for the Gaussian kernel in $\Rd$ \cite{MR3200161}, and $Q(s,t)=\beta(t-s)$ with a constant $\beta\ge 0$ is a common choice.
In this work we use similar dilations to produce $\sp$. 

In principle,
\eqref{def:coeNs}
relaxes \eqref{eq:kk_1} and allows for more functions $q$.
This is seen in \cite{MR3200161} and again
in Section  \ref{sec:Kato} below, where we consider applications to transition densities of subordinators.
We should note that the flexibility comes at the expense of the sharpness
of the resulting estimate, as seen when comparing \eqref{eq:metgKpt} and \eqref{eq:gpte4} with \eqref{ineq:thm1a}.
Also, the methods of \cite{MR3200161} and the present paper are restricted to transition densities, while the methods of \cite{MR3000465} handle the more general so-called {\it forward} integral kernels. Last but not least, it may be cumbersome to point out $p^*$ suitable for $p$ and $q$, because this essentially requires guessing  
the rate of inflation of $\tp$.
In this connection we note that \eqref{eq:pf} trivially yields
\begin{equation}\label{def:coeNstriv}
\int_s^t \int_{X} p(s,x,u,z)\eta q(u,z) \tp(u,z,t,y)\,dz\,du \leq \eta\tp(s,x,t,y)\,.
\end{equation}
Thus, for perturbations of $p$ by $\eta q\ge 0$ with $0\le \eta<1$ one may take $\sp=\tp$, hence
estimating $\tp$ and finding an appropriate majorant $p^*$ are closely related problems.
Comparing to the approach of \cite{MR3000465} we finally note that $\sp$ should reflect the growth patterns of $\tp$, which $p$ is not always able to do.

We say that $q$
satisfies the parabolic Kato condition for $p$ if
\begin{align}\label{Kato1a}
\lim_{h\to 0^+}\sup_{s\in \RR, x\in X}\int_s^{s+h}\int_X p(s,x,u,z)q(u,z)\,dzdu=0\,,
\end{align}
and
\begin{align}\label{Kato1b}
\lim_{h\to 0^+}\sup_{t\in \RR, y\in X}\int_{t-h}^{t}\int_X p(u,z,t,y)q(u,z)\,dzdu=0\,,
\end{align}
cf. \cite[(29), (30)]{MR2457489}.
The relations between \eqref{Kato1a}, \eqref{Kato1b}, 3G and \eqref{eq:kk_1} is discussed in \cite[Lemma~9 and Corollary~11]{MR2457489} and \cite[(40), (7) and Lemma~5]{MR2457489}. 
Similar connections exist for \eqref{def:coeNs}, parabolic Kato conditions and 4G,
but we leave the details to the interested reader (see also the proof of Proposition~\ref{prop:Kato}).

Of particular interest here is the special case of convolution semigroups of probability measures $\{p_t\}_{t\ge 0}$ on $X=\Rd$, which are defined by the generating (L\'evy) triplets $(A,b,\nu)$ \cite{MR1739520}, and correspond to the generators
\begin{align}\label{formula:gen_Levy}
Lf(x) =& \frac{1}{2}\sum_{j,k=1}^d A_{j,k} \frac{\partial^2 f}{\partial x_j \partial x_k}(x)+\sum_{j=1}^d b_i \frac{\partial f}{\partial x_j}(x)\\ \nonumber
&+\int_{\RR^d}\left(f(x+y)-f(x) -\sum_{j=1}^d y_j\frac{\partial f}{\partial x_j} (x) 1_{|y|\le 1} (y) \right)\nu(dy)\,.
\end{align}
Namely we let $P_tf(x)=\int_{\Rd}f(z+x)p_t(dz)$, $t\geq 0$, and recall that $(P_t)_{t\geq 0}$  form a strongly continuous semigroup on $(C_0(\Rd),||\cdot||_{\infty})$, whose infinitesimal generator $L$ satisfies \eqref{formula:gen_Levy} for $f\in C_0^2(\Rd)$ and $x\in \Rd$. 
Furthermore for all $s\in\RR$, $x\in\RR^d$ and $\phi\in C_c^{\infty}(\RR\times\RR^d)$ (smooth compactly supported functions on space-time $\RR\times\RR^d$) we have
\begin{align}\label{eq:fsLp}
\int_s^{\infty} \int_{\RR^d} \Big[\partial_u \phi(u,x+z) + L\phi(u,x+z) \Big]p_{u-s}(dz)du = -\phi(s,x)\,.
\end{align}
The identity is essentially a consequence of the fundamental theorem of calculus. It is proved  in Section~\ref{sec:uniq} in the generality of strongly continuous operator semigroups.
We also 
provide a uniqueness result there.
A special case of $L$ is the Weyl derivative of order $1/2$ on 
the real line:
\begin{align}\label{eq:gen_sub_12}
\partial^{1/2} f(x)= \pi^{-1/2} \int_x^{\infty} f'(z) (z-x)^{-1/2}\,dz\,,\quad f\in C_c^1(\RR)\,.
\end{align}
We then have
\begin{align}\label{def:p12}
p_t(dz)=(4\pi)^{-1/2} tz^{-3/2}\exp\left\{-t^2/(4z) \right\}\1_{z>0}\,dz\,,
\end{align}
the distribution of the $1/2$-stable subordinator \cite{MR1739520} (also called the L\'{e}vy subordinator).
More generally, we let $\lambda\ge0$, $\delta>0$, $z\in \RR$, $t>0$, and 
\begin{equation}\label{def:p}
p(t,z)=(4\pi)^{-1/2}\delta t z^{-3/2}
\exp\left\{-\frac{(\delta t-2\sqrt{\lambda}z)^2}{4z}\right\}\1_{z>0}\,.
\end{equation}
We note that $p(t,z)$ is the density function
of the distribution of the inverse Gaussian subordinator
$\xi_t=\inf\{s>0: \, B_{s}+\sqrt{2\lambda} s=t\delta/\sqrt{2}   \}$, where $B$ is the standard
one-dimensional Brownian motion,
cf. \cite[Example~1.3.21]{MR2512800} and \cite[Table~4.4]{MR2042661}.
Alternatively $p$
may be obtained from the density function of the $1/2$-stable subordinator by
the Esscher transform  and time rescaling, see \cite[Example 33.15]{MR1739520} or \cite[Sec.~4.4.2]{MR2042661}.
Accordingly, the L\'{e}vy measure $\mu$ of the inverse Gaussian subordinator  is obtained by the exponential tilting of the L\'{e}vy measure $\nu$ of the $1/2$-stable subordinator, where
\begin{align*}
\nu(dy)= \frac{1}{2\sqrt{\pi}} \,y^{-3/2} 1_{y>0}\,dy\quad\text{ and }\quad \mu(dy)=\delta e^{-\lambda y}\nu(dy).
\end{align*}
The generator corresponding to the inverse Gaussian subordinator
is calculated  for $f\in C_c^1(\RR)$ as 
\begin{equation}\label{eq:giG}
Lf(x)=\frac{\delta}{2\sqrt{\pi}} \int_x^{\infty} f'(z)\,\Gamma_{\lambda}(-1/2,z-x)\,dz\,.
\end{equation}
Here $\Gamma_{\lambda}(a,z)=\int_z^{\infty} e^{-\lambda y} y^{a-1}\,dy$ for $\lambda, z>0$, $a\in\RR$, is the incomplete gamma function. For the readers's convenience we prove \eqref{eq:gen_sub_12} and \eqref{eq:giG} in Section~\ref{sec:uniq}. Some further discussion can be found in  \cite{Yana}.
We also note that 
the Laplace exponent of $\xi_t$ is 
$u\mapsto\delta(\sqrt{u+\lambda}-\sqrt{\lambda})$,
see, e.g., \cite[Example~1.3.21]{MR2512800}, \cite[Example~8.11 and 33.15]{MR1739520}.

\section{4G inequality for the inverse Gaussian  subordinator}\label{sec:p12s}
Our main goal is to give conditions for and discuss consequences of \eqref{ineq:thm1a}.
Let $\lambda\ge0$ and $\delta>0$. Using \eqref{def:p} we define 
$$p(s,x,t,y)=p(t-s,y-x),$$
if $s< t$ and  $x,y\in \RR$, and we let $p=0$ otherwise. It is a transition density on $X=\RR$ with respect to  the Lebesgue measure.
We observe that 3G inequality does not hold for $p$. Indeed, if
$u-s=t-u=z-x=y-z=\theta$, then
\begin{align*}
p(s,x,u,z)\land p(u,z,t,y)&= (4\pi)^{-1/2} \delta\, \theta^{-1/2} \exp\left\{- \,\theta (\delta-\sqrt{\lambda})^2/4\right\},\\
p(s,x,t,y)&= (4\pi)^{-1/2} \delta \, (2\theta)^{-1/2} \exp\left\{-2 \,\theta (\delta-\sqrt{\lambda})^2/4\right\},
\end{align*}
and the second expression decays exponentially faster as $\theta\to \infty$.
For $c>0$ we consider auxiliary (inverse Gaussian) transition density
\begin{equation}\label{def:pc}
\rho_c(s,x,t,y):=cp(c(t-s),c(y-x)).
\end{equation}
In view toward \eqref{eq:pCtp} we note that for $0<a<b$,
\begin{align}\label{ineq:pb}
\rho_b(s,x,t,y)\leq \left(b/a \right)^{1/2}\rho_a(s,x,t,y)\,.
\end{align}
We shall consider the Schr\"odinger perturbation $\tp$ of $p=\rho_1$ by $q$.
Clearly, if $q\in\pN(\rho_1,\rho_a,(1/a)^{1/2},Q,\eta)$, with $0<a<1$, $\eta\in[0,1)$, then $\tp$ is finite, in fact it satisfies \eqref{ineq:thm1a}. Here is a connection to generators. 
\begin{lemma}\label{cor:fst}
If $q\in\pN(\rho_1,\rho_a,(1/a)^{1/2},Q,\eta)$, where $0<a<1$, $\eta\in[0,1)$, then
\begin{align*}
\int_s^{\infty} \int_{\RR} \tp(s,x,u,z)\Big[ \partial_u \phi(u,z)+ L\phi(u,z) + q(u,z)\phi(u,z) \Big]\,dzdu=-\phi(s,x)\,,
\end{align*}
for $\phi\in C_c^{\infty}(\RR\times \RR)$, where $L$ is given by \eqref{eq:giG}.
\end{lemma}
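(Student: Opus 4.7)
My plan is to expand $\tp$ via its Dyson--Phillips series $\tp=\sum_{n=0}^\infty p_n$ (see \eqref{def:tp}--\eqref{def:p_n}) and reduce the identity to the fundamental solution identity \eqref{eq:fsLp} for the unperturbed kernel $p=p_0$, treating each term in the series separately and telescoping.

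Concretely, I would introduce
\begin{align*}
I_n&=\int_s^\infty\!\!\int_{\RR} p_n(s,x,u,z)\bigl[\partial_u\phi(u,z)+L\phi(u,z)+q(u,z)\phi(u,z)\bigr]\,dz\,du,\\
A_n&=\int_s^\infty\!\!\int_{\RR} p_n(s,x,u,z)\,q(u,z)\phi(u,z)\,dz\,du.
\end{align*}
For $n=0$ the identity \eqref{eq:fsLp} gives $I_0=-\phi(s,x)+A_0$. For $n\ge1$ I would use the equivalent form
$p_n(s,x,u,z)=\int_s^u\!\int_{\RR} p_{n-1}(s,x,r,w)q(r,w)p(r,w,u,z)\,dw\,dr$
of \eqref{def:p_n} (obtained by a trivial induction using associativity of the convolution), swap the order of integration by Fubini, apply \eqref{eq:fsLp} to the inner integral in $(u,z)$ based at $(r,w)$, and recognize the remaining convolution as $p_n$. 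This yields the telescoping identity $I_n=A_n-A_{n-1}$ for $n\ge1$, whence $\sum_{n=0}^N I_n=-\phi(s,x)+A_N$.

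It then remains to show $A_N\to 0$ and that $\sum_{n=0}^\infty I_n$ equals the integral of $\tp$ against $[\partial_u\phi+L\phi+q\phi]$. Both interchanges follow from dominated convergence once $\tp\bigl[|\partial_u\phi|+|L\phi|+q|\phi|\bigr]$ is integrable on $\supp\phi$. The estimate \eqref{ineq:thm1a} gives $\tp\le C'\rho_a$ on any compact subset of $\{u>s\}$. Choosing $s_0>s$ and $T,M>0$ with $\supp\phi\subset[s_0,T]\times[-M,M]$, both $\rho_a(s,x,\cdot,\cdot)$ and $\partial_u\phi+L\phi$ are bounded on this set. For the local integrability of $q$, I would apply the defining inequality of $q\in\pN(\rho_1,\rho_a,(1/a)^{1/2},Q,\eta)$ at a starting point $(s',x')$ with $s<s'<s_0$ and $x'<-M$, and endpoint $(t',y')$ with $t'>T$ and $y'>M$: on $[s_0,T]\times[-M,M]$, both $\rho_1(s',x',\cdot,\cdot)$ and $\rho_a(\cdot,\cdot,t',y')$ are continuous and strictly positive, hence bounded below by positive constants, so the finiteness of the triple integral $\int\rho_1\,q\,\rho_a$ transfers to finiteness of $\int q$ on $\supp\phi$.

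The main obstacle is precisely this last integrability step, which crucially uses the strict positivity and continuity of the inverse Gaussian kernel together with the Kato-type hypothesis $q\in\pN$. Once in place, dominated convergence lets one pass to the limit $N\to\infty$ in $\sum_{n=0}^N I_n=-\phi(s,x)+A_N$, and absolute convergence justifies interchanging the sum with the integral, yielding the stated identity for $\tp$.
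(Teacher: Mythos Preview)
Your series-and-telescoping route differs from the paper's, which works directly with the perturbation identity $\tP=P+\tP qP$: writing $\psi=\partial_u\phi+L\phi$, so that $P\psi=-\phi$ by \eqref{eq:fsLp}, one computes in a single line
\[
\tP(\psi+q\phi)=(P+\tP qP)\psi+\tP q\phi=-\phi+\tP q(P\psi+\phi)=-\phi.
\]
Your expansion into $\sum_n p_n$ unpacks this same cancellation term by term, and the telescoping $I_n=A_n-A_{n-1}$ is algebraically correct.

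The gap is in your integrability justification. First, $L$ is nonlocal, so $L\phi(u,\cdot)$ is not supported in $[-M,M]$; you cannot control $\int\tp\,|L\phi|$ by restricting to $\supp\phi$. More seriously, the choice $\supp\phi\subset[s_0,T]\times[-M,M]$ with $s_0>s$ is possible only when $s$ lies strictly to the left of the time-support of $\phi$, whereas the identity must hold for every $s\in\RR$. When $s$ is inside the time-support, $\rho_a(s,x,u,z)$ is \emph{unbounded} on $\supp\phi$ (it blows up as $u\downarrow s$ along suitable rays), so neither your bound on $\tp\,|\psi|$ nor your bound on $\tp\,q|\phi|$ via local integrability of $q$ survives. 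The $\psi$-term is easily repaired: use that $\rho_a(s,x,u,\cdot)$ integrates to $1$ and that $\psi$ is bounded with compact time-support, giving $\tP|\psi|\le C'\Ps|\psi|<\infty$ for all $s$. For the $q\phi$-term, and for the Fubini inside your telescoping (which actually requires $\int p_{n-1}\,q\,P|\psi|<\infty$, not merely $A_{n-1}<\infty$), local integrability of $q$ is insufficient near $u=s$; the paper instead uses $|\phi|=|P\psi|\le P|\psi|$ together with the operator bounds $\tP\le c\Ps$ and $Pq\Ps\le c\Ps$ coming from \eqref{ineq:thm1a} and \eqref{def:coeNs}, obtaining
\[
\tP q|\phi|\le\tP qP|\psi|=Pq\tP|\psi|\le c\,Pq\Ps|\psi|\le c\,\Ps|\psi|<\infty.
\]
With these replacements your argument goes through for all $s$.
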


\begin{proof}
We will follow the proof of \cite[Lemma 4]{MR3000465} with some modifications. 
We define integral operators
\begin{align*}
Pf(s,x)&=\int_s^\infty \int_\RR p(s,x,t,y)f(t,y)dydt,\\
qf(s,x)&=q(s,x)f(s,x),\\
\tP f(s,x)&=\int_s^\infty \int_\RR \tp(s,x,t,y)f(t,y)dydt,\\
\Ps f(s,x)&=\int_s^\infty \int_\RR \rho_a(s,x,t,y)f(t,y)dydt,
\end{align*}
for $s,x\in \RR$ and jointly measurable and nonnegative or absolutely integrable  functions $f:\RR\times \RR\to \RR$.
By \eqref{eq:pfv} and \eqref{eq:pf},
\begin{equation}\label{eq:pfc}
\tP=P+Pq\tP=P+\tP qP,
\end{equation}
hence $\tP q P=Pq \tP$.
Let $\phi\in C^\infty_c(\RR\times \RR)$ and $\psi=\partial_s \phi+L\phi$. By \eqref{eq:fsLp},
\begin{align*}
\int_s^{\infty} \int_{\RR} p(s,x,u,z)\Big[ \partial_u \phi(u,z)+ L\phi(u,z)  \Big]\,dzdu=-\phi(s,x)\,.
\end{align*}
In short, $P\psi=-\phi$. For clarity, since $\psi$ is bounded \cite[p.~211]{MR1739520} and $\psi(s,x)=0$ if $|s|$ is large, 
we have $P|\psi|<\infty$.
By \eqref{eq:pfc} and Fubini's theorem,
\begin{align*}
\tP (\psi+q\phi)= (P+\tP q P)\psi+\tP q \phi
=-\phi +\tP q P \psi + \tP q (-P\psi)= -\phi\,.
\end{align*}
The identity is precisely the claim of the lemma, but 
we need to verify the absolute convergence of the integrals above.
Since $q\in\pN(\rho_1,\rho_a,(1/a)^{1/2},Q,\eta)$, we have
$Pq\Ps\le c \Ps$ and $\tP\le c \Ps$, when applied to nonnegative functions in a bounded time horizon, cf. \eqref{def:coeNs} and \eqref{ineq:thm1a}.
It follows that in bounded time, 
$$
\tP q P |\psi|=Pq\tP|\psi|\le c Pq \Ps |\psi|\le c \Ps |\psi|<\infty.
$$
Since $|\phi|\le P|\psi|$, we get $\tP q |\phi|<\infty$. 
The proof is complete.
\end{proof}

Following \cite[Theorem~1.1]{MR1883198} and \cite{MR3200161}, the identity in the statement of Lemma~\ref{cor:fst} is interpreted by saying that $\tp$ is a fundamental solution of $\partial_s+L+q$ or, in short, for $L+q$. The identity also means that $\tp$ as integral operator is the left inverse of $\partial_s+L+q$. We refer to \cite[Remark~4.10]{MR3200161} for further discussion.

We point yet another aspect of the relationship between $\tp$ and $L+q$.
By Lemma~\ref{cor:fst} and Chapman-Kolmogorov, for $\phi\in C_c^{\infty}(\RR\times \RR)$ we obtain
\begin{align*}
&\int_s^{t}\int_{\RR} \tp (s,x,u,z) \Big[ \partial_u + L + q(u,z) \Big]\phi(u,z)\,dzdu\\
&=\int_s^{\infty}\int_{\RR} \tp (s,x,u,z) \Big[ \partial_u + L + q(u,z)\Big]\phi(u,z)\,dzdu\\
&-\int_t^{\infty}\int_{\RR} \tp (s,x,u,z) \Big[ \partial_u + L + q(u,z) \Big]\phi(u,z)\,dzdu\\
&=-\phi(s,x)
- \int_{\RR}\tp(s,x,t,w)\int_t^{\infty}\int_{\RR} \tp (t,w,u,z)
\times \\
&\hspace{100pt}\times
\Big[ \partial_u + L + q(u,z)\Big]\phi(u,z)\,dzdu\,dw\\
&=-\phi(s,x)
+\int_{\RR}\tp(s,x,t,w)\bigg[-\int_t^{\infty}\int_{\RR} \tp (t,w,u,z) \times\\
&\hspace{100pt}\times \Big[ \partial_u + L + q(u,z) \Big]\phi(u,z)\,dzdu\,dw\bigg]\\
&=\int_{\RR}\tp(s,x,t,w)\phi(t,w)dw-\phi(s,x)\,,\quad s<t,\ x\in \RR\,,
\end{align*}
and by choosing $\phi$ constant in time on $(s,t)$, for $\varphi\in C_c^{\infty}(\RR)$ we get
\begin{align*}
\int_{\RR}\tp (s,x,t,z)\varphi(z)\,dz -\varphi(x)=\int_s^{t}\int_{\RR} \tp (s,x,u,z) \Big[L\varphi(z) + q(u,z)\varphi(z) \Big]\,dzdu\,.
\end{align*}
The identity is an analogue of classical formulas for strongly continuous operator semigroups, and so is \eqref{eq:pf}. 
Further discussion of the connection to generators 
is given in Section~\ref{sec:uniq}.

We now investigate the class $\pN(\rho_b,\rho_a,(b/a)^{1/2},\eta,Q)$, where  $0<a<b$, namely we propose conditions sufficient for \eqref{def:coeNs}. 
We first recall results of \cite[Section 3]{MR3200161} on the Gaussian kernel
\begin{align}\label{def:gc}
g_c(s,\dd{x},t,\dd{y}):=[4\pi (t-s)/c ]^{-d/2}\exp \left\{-|\dd{y}-\dd{x}|^2/[4(t-s)/c] \right\},
\end{align}
where $c>0$, $0<s<t$, $\dd{x},\dd{y}\in\Rd$ and $d\in\NN$. We denote
$$
l(\alpha)=\max_{\tau\geq \alpha \vee 1/\alpha} \left[\ln(1+\tau) - \frac{\tau-\alpha}{1+\tau} \ln(\alpha\tau) \right],
$$
and for $0<a<b$ we let $M=\left( \frac{b}{b-a}\right)^{d/2}\exp\left[ \frac{d}{2}l(\frac{a}{b-a}) \right]$. Then we have
\begin{align}\label{ineq:4Gg}
\frac{g_b (s,\dd{x},u,\dd{z}) g_a(u,\dd{z},t,\dd{y})}{g_a(s,\dd{x},t,\dd{y})} \leq M [g_{b-a}(s,\dd{x},u,\dd{z})\vee g_a(u,\dd{z},t,\dd{y})]\,,
\end{align}
where $s<u<t$ and $\dd{x},\dd{z},\dd{y}\in\Rd$
\cite[Theorem 1.3 and Remark 3.2]{MR3200161}.  Moreover,  $M$ is the optimal constant in \eqref{ineq:4Gg}, and  if $b/a\le 1+ e^{-1/2}$, then $M=(1-a/b)^{-d}$.
This 4G inequality is used in \cite{MR3200161} to obtain Gaussian estimates for 
Schr\"odinger perturbations of transition densities of the second order parabolic differential operators.  In this section we prove a similar inequality for the transition density $\rho_c$ defined in \eqref{def:pc}.
\begin{theorem}[4G]\label{thm:4Gp}
Let $0<a<b$.
For all $s<u<t$ and $x<z<y$,
\begin{align}\label{ineq:4Gs}
\!\!\!\!\!\rho_b(s,x,u,z)\rho_a(u,z,t,y)\leq D \Big[ \rho_{b-a}(s,x,u,z)\vee \rho_a(u,z,t,y) \Big] \rho_a(s,x,t,y)\,
\end{align}
holds with  $D=\left( \frac{b}{b-a}\right)^{3/2}\exp\left[ \frac{3}{2}L\left(\frac{a}{b-a}\right) \right]$.
\end{theorem}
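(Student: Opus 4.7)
The plan is to reduce the claim directly to the $d=3$ case of the Gaussian 4G inequality \eqref{ineq:4Gg}.  The crucial pointwise identity I would verify at the outset is
\[
\rho_c(s,x,t,y) \;=\; \frac{4\pi\delta(t-s)}{c}\, G_c\!\left(\delta(t-s)-2\sqrt{\lambda}(y-x),\;y-x\right),
\]
where $G_c(\omega,\theta):=(c/(4\pi\theta))^{3/2}\exp(-c\omega^2/(4\theta))$ is the 3-dimensional Gaussian heat kernel of \eqref{def:gc} seen as a function of a scalar ``spatial'' argument $\omega\in\RR$ and ``time'' $\theta>0$.  This follows by matching the prefactor $c^{1/2}\delta(t-s)/[(4\pi)^{1/2}(y-x)^{3/2}]$ of $\rho_c$ (from \eqref{def:p}--\eqref{def:pc}) with $(4\pi\delta(t-s)/c)\cdot(c/[4\pi(y-x)])^{3/2}$ and by reading off the quadratic $-c(\delta(t-s)-2\sqrt{\lambda}(y-x))^2/[4(y-x)]$ as $-c\omega^2/(4\theta)$.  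Spotting this $d=3$ identification is the main hurdle; it also explains the exponent $3/2$ appearing in the target constant $D$.

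Next I would set $\tau_1=u-s$, $\tau_2=t-u$, $w_1=z-x$, $w_2=y-z$ and $\omega_i:=\delta\tau_i-2\sqrt{\lambda}w_i$ for $i=1,2$.  Both sums telescope correctly, $\omega_1+\omega_2=\delta(t-s)-2\sqrt{\lambda}(y-x)$ and $w_1+w_2=y-x$, so \eqref{ineq:4Gg} with $d=3$ applies to the collinear points $\dd 0,\;\omega_1\dd e,\;(\omega_1+\omega_2)\dd e\in\RR^3$ (for any unit vector $\dd e$) and the times $0<w_1<w_1+w_2$, yielding
\[
\frac{G_b(\omega_1,w_1)\,G_a(\omega_2,w_2)}{G_a(\omega_1+\omega_2,w_1+w_2)}\;\le\;M\,\bigl[G_{b-a}(\omega_1,w_1)\vee G_a(\omega_2,w_2)\bigr]
\]
with $M=(b/(b-a))^{3/2}\exp\bigl(\tfrac{3}{2}l(a/(b-a))\bigr)$, which matches the claimed $D$ (so that $L=l$).

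The final step is translation back.  Substituting $\rho_b(s,x,u,z)=(4\pi\delta\tau_1/b)G_b(\omega_1,w_1)$ and its analogues for the other two factors, the ratio $\rho_b(s,x,u,z)\rho_a(u,z,t,y)/\rho_a(s,x,t,y)$ equals $(4\pi\delta\tau_1\tau_2)/(b(t-s))$ times the Gaussian ratio just displayed.  Reconverting $G_{b-a}(\omega_1,w_1)=(b-a)\rho_{b-a}(s,x,u,z)/(4\pi\delta\tau_1)$ and $G_a(\omega_2,w_2)=a\rho_a(u,z,t,y)/(4\pi\delta\tau_2)$ gives
\[
\frac{\rho_b(s,x,u,z)\rho_a(u,z,t,y)}{\rho_a(s,x,t,y)}\;\le\;M\max\!\left(\frac{(b-a)\tau_2}{b(t-s)}\rho_{b-a}(s,x,u,z),\;\frac{a\tau_1}{b(t-s)}\rho_a(u,z,t,y)\right).
\]
Since $\tau_1,\tau_2\le t-s$ and $a,\,b-a\le b$, both weights are at most $1$, so the right-hand side is dominated by $M\bigl[\rho_{b-a}(s,x,u,z)\vee\rho_a(u,z,t,y)\bigr]$, and \eqref{ineq:4Gs} follows with $D=M$.
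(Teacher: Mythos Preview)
Your proof is correct and follows essentially the same route as the paper: both arguments recognize $\rho_c$ as a constant multiple of the three-dimensional Gaussian kernel $g_c$ (with the space variable of $\rho$ playing the role of Gaussian time and $\delta(t-s)-2\sqrt{\lambda}(y-x)$ embedded along a line in $\RR^3$), apply the Gaussian 4G inequality \eqref{ineq:4Gg} with $d=3$, translate back, and then discard the factors $(b-a)\tau_2/(b(t-s))$ and $a\tau_1/(b(t-s))$, both of which are at most $1$. Your notation is more explicit about the scalar reduction, but the computation and the resulting constant are identical; your remark that the $L$ in the statement of the theorem is the same function as the $l$ defined before \eqref{ineq:4Gg} is also correct.
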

\begin{proof}
We denote $\dd{r}=(r,0,0)\in\RR^3$ for $r\in\RR$. For $c>0$, $s<t$, $x<y$,
\begin{align*}
\rho_c (s,x,t,y)=(4\pi\delta (t-s)/c)\, g_c (x,\delta\dd{s}-2\sqrt{\lambda}\dd{x},y,\delta\dd{t}-2\sqrt{\lambda}\dd{y})\,.
\end{align*}
By \eqref{ineq:4Gg} for all $s<u<t$ and $x<z<y$ we have
\begin{align*}
\rho_b &(s,x,u,z) \rho_a (u,z,t,y)= \frac{(4\pi\delta)^2 (u-s)(t-u)}{ab}\times\\
&\quad\times g_b(x,\delta\dd{s}-2\sqrt{\lambda}\dd{x},z,\delta\dd{u}-2\sqrt{\lambda}\dd{z}) g_a(z,\delta\dd{u}-2\sqrt{\lambda}\dd{z},y,\delta\dd{t}-2\sqrt{\lambda}\dd{y})\\
&\leq \frac{(4\pi\delta)^2 (u-s)(t-u)}{ab} D\,g_a(x,\delta\dd{s}-2\sqrt{\lambda}\dd{x},y,\delta\dd{t}-2\sqrt{\lambda}\dd{y})\times\\
&\quad\times  [g_{b-a}(x,\delta\dd{s}-2\sqrt{\lambda}\dd{x},z,\delta\dd{u}-2\sqrt{\lambda}\dd{z}) \vee g_{a}(z,\delta\dd{u}-2\sqrt{\lambda}\dd{z},y,\delta\dd{t}-2\sqrt{\lambda}\dd{y})]\\
&= D \left[ \left(\frac{t-u}{t-s}\,\frac{b-a}{b}\right) \rho_{b-a}(s,x,u,z) \vee \left(\frac{u-s}{t-s}\,\frac{a}{b} \right) \rho_a(u,z,t,y)\right] \rho_a (s,x,t,y)\\
&\leq D \Big[ \rho_{b-a}(s,x,u,z) \vee \rho_a(u,z,t,y)\Big]\, \rho_a (s,x,t,y)\,.
\end{align*}
\end{proof}
We are ready to give sufficient  conditions for \eqref{def:coeNs}. First comes an immediate consequence of Theorem \ref{thm:4Gp}.
\begin{corollary}
Assume that for all $s<t$, $x<y$,
\begin{align*}
D \int_s^t \int_{\RR} \Big[ \rho_{b-a}(s,x,u,z)+ \rho_a(u,z,t,y) \Big]q(u,z) \,dzdu \leq \eta + Q(s,t)\,.
\end{align*}
Then $q\in \pN(\rho_b, \rho_a,(b/a)^{1/2}, \eta,Q)$.
\end{corollary}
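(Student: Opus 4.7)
The plan is to verify the three defining conditions of the class $\pN(\rho_b,\rho_a,(b/a)^{1/2},\eta,Q)$: the pointwise domination \eqref{eq:pCtp} with constant $C=(b/a)^{1/2}$, the regular superadditivity of $Q$ (implicit in the notation of the class), and the integral smallness condition \eqref{def:coeNs}. The first is immediate from \eqref{ineq:pb} since $0<a<b$, and the second is the standing hypothesis on $Q$ throughout this section. So the whole content of the corollary is the integral condition \eqref{def:coeNs}.

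To derive \eqref{def:coeNs}, I would begin with the 4G inequality of Theorem~\ref{thm:4Gp},
$$
\rho_b(s,x,u,z)\rho_a(u,z,t,y) \leq D\big[\rho_{b-a}(s,x,u,z)\vee \rho_a(u,z,t,y)\big]\rho_a(s,x,t,y),
$$
valid for $x<z<y$, and apply the trivial bound $A\vee B\leq A+B$ for $A,B\geq 0$ to pass from the maximum to the sum. Multiplying by $q(u,z)\geq 0$ and integrating in $(u,z)\in(s,t)\times\RR$ yields
$$
\int_s^t\!\!\int_\RR \rho_b(s,x,u,z)q(u,z)\rho_a(u,z,t,y)\,dz\,du \leq D\rho_a(s,x,t,y)\int_s^t\!\!\int_\RR \big[\rho_{b-a}(s,x,u,z)+\rho_a(u,z,t,y)\big]q(u,z)\,dz\,du.
$$
The hypothesis of the corollary bounds the second factor on the right by $\eta+Q(s,t)$, which is exactly \eqref{def:coeNs}.

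The only point requiring care is the range of the variables: Theorem~\ref{thm:4Gp} is stated for $x<z<y$ only, whereas \eqref{def:coeNs} must be verified for every $x,y\in\RR$. This is harmless because $\rho_c(s,x,t,y)$ vanishes outside $\{s<t,\ x<y\}$ (the inverse Gaussian subordinator lives on the positive half-line). Consequently, if $x\geq y$ both sides of \eqref{def:coeNs} are zero; if $x<y$ the integrand on the left is supported in $\{x<z<y\}$, where Theorem~\ref{thm:4Gp} applies directly and division by the positive quantity $\rho_a(s,x,t,y)$ causes no issue.

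I do not expect any serious obstacle: once Theorem~\ref{thm:4Gp} is available, the corollary is essentially a repackaging of the 4G estimate, trading $\vee$ for $+$ and invoking the assumed integral bound. The only \emph{conceptual} step is recognising that the support constraints of the subordinator reconcile the pointwise range $x<z<y$ in 4G with the a priori all-$(x,y)$ form of \eqref{def:coeNs}.
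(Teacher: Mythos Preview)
Your argument is correct and is precisely the intended derivation: the paper presents this corollary as an immediate consequence of Theorem~\ref{thm:4Gp}, and your verification of \eqref{def:coeNs} via $\vee\le +$ together with \eqref{ineq:pb} for the constant $(b/a)^{1/2}$ is exactly what is meant. Your remark on the support of $\rho_c$ reconciling the range $x<z<y$ in 4G with the general form of \eqref{def:coeNs} is a nice point of rigor that the paper leaves implicit.
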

\noindent
Motivated by \eqref{Kato1a} and \eqref{Kato1b} for 
$c,h>0$ we next define
\begin{align*}
N_h^c(
q) =&\sup_{s,x} \int_s^{s+h}\int_{\RR} \rho_c(s,x,u,z)q(u,z)\,dzdu\\
+&\sup_{t,y}\int_{t-h}^t\int_{\RR}\rho_c(u,z,t,y)q(u,z)\,dzdu.
\end{align*}

\begin{proposition}\label{prop:Kato}
Let $0<a<b$ and $D'=\left(\frac{b-a}{a}\vee \frac{a}{b-a} \right)^{1/2}D$.
If 
\begin{align}\label{ineq:N_male}
N_{h}^{(b-a)\land a}(q)\leq \eta /D'
\end{align}
for some $0<h\le \infty$, $0\le \eta<\infty$, then for $Q(s,t)=\eta (t-s)/h$ we have
$$
q\in\pN(\rho_b,\rho_a,(b/a)^{1/2},\eta,Q)\,.
$$
\end{proposition}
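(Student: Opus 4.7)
The plan is to verify the three defining clauses of $q\in\pN(\rho_b,\rho_a,(b/a)^{1/2},\eta,Q)$. Of these, the required domination $\rho_b\le(b/a)^{1/2}\rho_a$ is exactly \eqref{ineq:pb}, and the proposed $Q(s,t)=\eta(t-s)/h$ is (additively) superadditive and continuous, hence regular. The actual content is the integral condition \eqref{def:coeNs}, which I would establish in two steps: first reduce via Theorem~\ref{thm:4Gp} and \eqref{ineq:pb} to integrals involving a single auxiliary density $\rho_c$ with $c=(b-a)\wedge a$; then use Chapman--Kolmogorov iteratively to bound those integrals in terms of $N_h^c(q)$.

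For the first step I apply the 4G inequality (observing that outside the range $x<z<y$ the integrand on the left of \eqref{def:coeNs} vanishes trivially) and bound the maximum by a sum to obtain
\begin{align*}
\rho_b(s,x,u,z)\rho_a(u,z,t,y)\le D[\rho_{b-a}(s,x,u,z)+\rho_a(u,z,t,y)]\rho_a(s,x,t,y).
\end{align*}
Then \eqref{ineq:pb}, applied with the smaller parameter $c=(b-a)\wedge a$, gives $\rho_{b-a}\le((b-a)/c)^{1/2}\rho_c$ and $\rho_a\le(a/c)^{1/2}\rho_c$; the larger of these two prefactors is $((b-a)/a\vee a/(b-a))^{1/2}$, which together with $D$ produces exactly $D'$. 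Multiplying by $q(u,z)$ and integrating in $(u,z)$, the left side of \eqref{def:coeNs} is thus bounded by
\begin{align*}
D'\rho_a(s,x,t,y)\left(\int_s^t\!\!\int_\RR\rho_c(s,x,u,z)q(u,z)\,dzdu+\int_s^t\!\!\int_\RR\rho_c(u,z,t,y)q(u,z)\,dzdu\right).
\end{align*}

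For the second step I partition $[s,t]$ into $n=\max(1,\lceil(t-s)/h\rceil)$ equal subintervals $[t_k,t_{k+1}]$ of length at most $h$. For the first of the two integrals above and $k\ge 1$, I insert Chapman--Kolmogorov at $t_k$, namely $\rho_c(s,x,u,z)=\int_\RR\rho_c(s,x,t_k,w)\rho_c(t_k,w,u,z)\,dw$, swap integrals by Fubini, and use that $\int_\RR\rho_c(s,x,t_k,w)\,dw=1$ (the inverse Gaussian subordinator is conservative) to bound the $k$-th piece by $\sup_{s',x'}\int_{s'}^{s'+h}\!\int_\RR\rho_c(s',x',u,z)q(u,z)\,dzdu$. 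The $k=0$ piece is bounded directly by the same supremum since its length is at most $h$. A symmetric argument, partitioning from the right and using $\int_\RR\rho_c(u,z,t,y)\,dz=1$ (spatial homogeneity), handles the second integral. Summing over $k$ yields the bound $n\cdot N_h^c(q)\le[1+(t-s)/h]\,\eta/D'$, and multiplying by $D'\rho_a(s,x,t,y)$ delivers exactly $[\eta+Q(s,t)]\rho_a(s,x,t,y)$, as required. The case $h=\infty$ reduces to the single-interval version. I do not foresee any serious obstacle; the only care required is tracking the optimal prefactor $D'$ through the reduction to $\rho_c$, which is precisely what dictates its definition in the statement.
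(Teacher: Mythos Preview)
Your proposal is correct and follows exactly the route the paper intends: the paper's proof consists solely of the reference ``Follow \cite[p.~165]{MR3200161}'', and what you have written is precisely that argument transported from the Gaussian kernel to $\rho_c$---apply the 4G inequality (Theorem~\ref{thm:4Gp}), replace the maximum by a sum, reduce both $\rho_{b-a}$ and $\rho_a$ to the single density $\rho_{(b-a)\wedge a}$ via \eqref{ineq:pb} (this is where the constant $D'$ arises), then cut $[s,t]$ into at most $1+(t-s)/h$ pieces of length $\le h$ and use Chapman--Kolmogorov plus conservativeness on each piece. One minor remark: in your backward step the integral you actually need to equal $1$ is $\int_\RR\rho_c(t_{k+1},w,t,y)\,dw$ (over the \emph{starting} point $w$), not over $z$; this still holds by the same spatial homogeneity, so nothing changes.
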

\begin{proof}
Follow \cite[p.~165]{MR3200161}.
\end{proof}
The condition $\lim_{h\to 0} N_h^c(q) =0$ 
defines the parabolic Kato class
for $\rho_c$, cf. Section~\ref{sec:intro}, and if it is satisfied, then Proposition~\ref{prop:Kato} applies.
A~thorough discussion of the Kato condition for arbitrary L\'evy processes on $\Rd$ is given in 
\cite{2015arXiv150305747G}. For the considered inverse Gaussian subordinator \eqref{def:p}, including the $1/2$-stable subordinator, if $q(u,z)=q(z)$  is time-independent, then the Kato condition is equivalent to
$$
\lim_{r\to 0^+} \sup_{x\in \RR} \int_{x-r}^{x+r} q(z)|z-x|^{-1/2}dz=0.
$$
We refer 
to \cite[Example~3]{2015arXiv150305747G} for the result.
A characteristic example here is $q(z)=|z|^{\varepsilon-1/2}$ for $\varepsilon \in (0,1/2]$.

In the remainder of this section
we focus on the case $\lambda=0$ and $\delta=1$ in \eqref{def:p}, i.e., on the density of the $1/2$-stable subordinator, with emphasis on
honest constants in estimates.
\begin{example}\label{ex:1}
\rm
We consider $q(u,z)=q(z)$ on $\RR$. Let 
$r>2$  and $q\in L^r(\RR)$.  Observe that for all $s<u$, $x\in \RR$ and $c>0$,
\begin{align*}
\int_{\RR} \rho_{c}(s,x,u,z)^{\sigma}\,dz= \frac{c'_{\sigma}}{c^{\sigma-1}} (u-s)^{-2(\sigma-1)} \,, \quad \sigma\geq 1 \,,
\end{align*}
where $c'_{\sigma}=(4\pi)^{-\sigma/2}(4/\sigma)^{3\sigma/2-1}\Gamma(3\sigma/2-1)
\leq \big[(4\pi)^{-1/2}(6/e)^{3/2}\big]^{\sigma-1}$. By H\"older's inequality, for $h>0$,
\begin{align*}
\sup_{s,x}&\int_s^{s+h}\int_{\RR}\rho_c(s,x,u,z)q(
z)\,dzdu\\
&\leq  \sup_{s,x}\int_s^{s+h} (u-s)^{-2/r}\,du\,\, \frac{\left( c'_{r/(r-1)}\right)^{(r-1)/r}}{c^{1/r}}||q||_r\\
&= h^{1-2/r}\, \left[ \left(c'_{r/(r-1)}\right)^{(r-1)/r}c^{-1/r}||q||_r/(1-2/r)\right]\,.
\end{align*}
Thus for every $c>0$,
\begin{align}\label{ineq:jawne}
N_h^c(q) \leq h^{1-2/r}\,  2 \left[ \frac{\left(c'_{r/(r-1)}\right)^{(r-1)/r}}{(1-2/r)\,c^{1/r}}||q||_r\right] \to 0\,, \quad \mbox{if}\quad h\to 0^+\,.
\end{align}
Notice also that $\left( c'_{r/(r-1)}\right)^{(r-1)/r}\leq \big[(4\pi)^{-1/2}(6/e)^{3/2}\big]^{1/r}$.
Finally, by Proposition \ref{prop:Kato} for all $0<a<b$ we obtain
\begin{align*}
q\in \pN(\rho_b,\rho_a,(b/a)^{1/2},\eta,Q)\,,
\end{align*}
with arbitrary $\eta>0$ and $Q(s,t)=\eta (t-s)/h$,
provided $h$ satisfies
$$
h^{1-2/r}\,  \frac{2D \left(\frac{b-a}{a}\vee \frac{a}{b-a} \right)^{1/2}}{(1-2/r)} \left[ \frac{ (4\pi)^{-1/2}(6/e)^{3/2}}{(b-a)\land a}\right]^{1/r}||q||_r =\eta\,.
$$
Indeed, \eqref{ineq:jawne} implies \eqref{ineq:N_male}.
\end{example}

We keep investigating the class $\pN(\rho_b,\rho_a,(b/a)^{1/2},\eta,Q)$ by estimating $N_h^c(q)$ for time-independent $q(u,z)=q(z)$. We first prove an auxiliary lemma for the general $\alpha$-stable subordinator, with $\alpha\in(0,1)$. Let 
\begin{align*}
I_{\varepsilon}(q)= \sup_{x\in\RR}\int_{|x-z|<\varepsilon} \frac{q(z)}{|x-z|^{1-\alpha}} \,dz\,,\qquad \varepsilon>0\,.
\end{align*}
Let $\gamma(t,z)$ be the density of the $\alpha$-stable subordinator, in particular, $\gamma(t,z)=0$ for $z\leq 0$ and
$$
\int_0^\infty e^{-uz}\gamma(t,z)\,dz=e^{-tu^{\alpha}}, \qquad u\ge 0,\quad t>0\,.
$$
\begin{lemma} For all $c,r,\tau>0$ and $0<\alpha<1$,
\begin{align*}
\sup_{s\in\RR, x\in\RR}\int_s^{s+\tau} \int_{\RR} \gamma_c(s,x,u,z)q(z)\,dzdu\leq \left( \frac1{c^{1-\alpha}\Gamma(\alpha)}+ \frac{2\tau}{r^{\alpha}} \right) I_r(q)\,,
\end{align*}
where $\gamma_c(s,x,t,y)=c\,\gamma(c(t-s),c(y-x))=\gamma(c^{1-\alpha}(t-s),y-x)$.
\end{lemma}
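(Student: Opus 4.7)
My approach is first to scale out the parameter $c$ and then to split the spatial integral at $|z-x|=r$, estimating the near-diagonal and far pieces by separate means.

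For the scaling step, the $\alpha$-stable identity $\gamma(a^{\alpha}t,aw)=a^{-1}\gamma(t,w)$ (a consequence of the representation $\gamma(t,w)=t^{-1/\alpha}g(wt^{-1/\alpha})$ with $g$ the density of $\xi_1$) immediately yields $\gamma_c(s,x,u,z)=\gamma(c^{1-\alpha}(u-s),z-x)$. Substituting $t=c^{1-\alpha}(u-s)$ and writing $\tilde\tau=c^{1-\alpha}\tau$ reduces the inequality, after clearing the common factor $1/c^{1-\alpha}$, to
\[
\sup_{x\in\RR}\int_0^{\tilde\tau}\int_{\RR}\gamma(t,z-x)q(z)\,dz\,dt\leq\left(\frac{1}{\Gamma(\alpha)}+\frac{2\tilde\tau}{r^{\alpha}}\right)I_r(q).
\]

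For the near piece $0<z-x<r$, I would interchange order of integration (Tonelli) and extend the inner integral to $(0,\infty)$, invoking the potential density
\[
\int_0^{\infty}\gamma(t,w)\,dt=\frac{w^{\alpha-1}}{\Gamma(\alpha)},\qquad w>0,
\]
obtained by inverting the Laplace-transform identity $\int_0^{\infty}e^{-tu^{\alpha}}\,dt=u^{-\alpha}$ against $E[e^{-u\xi_t}]=e^{-tu^{\alpha}}$. The near piece is then at most $\Gamma(\alpha)^{-1}\int_0^{r}q(x+w)\,w^{\alpha-1}\,dw$, which is $\leq I_r(q)/\Gamma(\alpha)$ directly from the definition of $I_r$.

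For the far piece $z-x\geq r$, my ingredients would be the subordinator tail estimate $P(\xi_t\geq r)\leq 2t/r^{\alpha}$, which follows from the pointwise bound $(1-e^{-1})\1_{\xi_t\geq r}\leq 1-e^{-\xi_t/r}$ together with $E[e^{-\xi_t/r}]=e^{-t/r^{\alpha}}$, the inequality $1-e^{-x}\leq x$ and the numerical fact $1/(1-e^{-1})<2$; and the local average control $\int_{z-r}^{z+r}q(y)\,dy\leq r^{1-\alpha}I_r(q)$, obtained by retaining only the lower bound $|y-z|^{\alpha-1}\geq r^{\alpha-1}$ inside the ball defining $I_r(q)$. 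Writing the far piece via Fubini as $\int_r^{\infty}q(x+w)\,U^{\tilde\tau}(w)\,dw$ with $U^{\tilde\tau}(w):=\int_0^{\tilde\tau}\gamma(t,w)\,dt$, one interpolates the Riesz-type behaviour $U^{\tilde\tau}(w)\leq w^{\alpha-1}/\Gamma(\alpha)$ (sharp for $w$ below the typical scale $\tilde\tau^{1/\alpha}$) with the L\'evy-tail behaviour $\gamma(t,w)\leq t\nu(w)=\alpha t/(\Gamma(1-\alpha)w^{1+\alpha})$ (sharp for $w$ above that scale) and combines it with the local $q$-average control to arrive at the target $2\tilde\tau/r^{\alpha}\cdot I_r(q)$.

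The main obstacle is the far piece: the constant $2$ in $2\tilde\tau/r^{\alpha}$ is tight, and one must calibrate the interplay between the subordinator tail bound and the local averages of $q$ carefully so as to preserve the linear dependence on $\tilde\tau$ and not pick up an $\alpha$-dependent constant larger than $2$; the sharp input is precisely the numerical inequality $1/(1-e^{-1})<2$ at the tail-bound step.
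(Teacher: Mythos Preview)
Your near-piece argument is clean and correct: extending the time integral to $(0,\infty)$ and using the potential density $w^{\alpha-1}/\Gamma(\alpha)$ gives exactly the term $I_r(q)/(c^{1-\alpha}\Gamma(\alpha))$. The far piece, however, has a genuine gap. Neither of your proposed pointwise inputs does the job. The tail bound $P(\xi_t\ge r)\le 2t/r^\alpha$ controls probability mass, not the convolution with an unbounded $q$, so it cannot by itself bound $\int_r^\infty\gamma(t,w)q(x+w)\,dw$. The L\'evy-tail inequality $\gamma(t,w)\le t\nu(w)$ is not established for general $\alpha\in(0,1)$ (it happens to hold for $\alpha=1/2$ thanks to the explicit exponential factor), and even if granted, integrating in $t$ gives $U^{\tilde\tau}(w)\le \tfrac12\tilde\tau^2\nu(w)$, which is quadratic in $\tilde\tau$. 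Your interpolation at the scale $\tilde\tau^{1/\alpha}$ can be tuned to recover linear dependence, but only with $\alpha$-dependent constants from $\Gamma(1\pm\alpha)$ and from the sum-versus-integral comparison, not with a universal $2$. Your diagnosis that the sharp input is $1/(1-e^{-1})<2$ is therefore off target.

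The missing idea is that the time-integrated kernel $U^{\tilde\tau}(w)=\int_0^{\tilde\tau}\gamma(t,w)\,dt$ is \emph{decreasing} in $w>0$. Stable scaling $\gamma(t,w)=w^{-1}\gamma(w^{-\alpha}t,1)$ and the substitution $s=w^{-\alpha}t$ give $U^{\tilde\tau}(w)=w^{\alpha-1}\int_0^{\tilde\tau/w^\alpha}\gamma(s,1)\,ds$, a product of two decreasing factors. With monotonicity in hand, the far piece falls out by your own shell decomposition: bound $U^{\tilde\tau}$ on $[jr,(j+1)r)$ by $U^{\tilde\tau}(jr)$, use $\int_{jr}^{(j+1)r}q(x+w)\,dw\le r^{1-\alpha}I_r(q)$, and sum via $\sum_{j\ge1}r\,U^{\tilde\tau}(jr)\le\int_0^\infty U^{\tilde\tau}=\tilde\tau$, giving the far contribution $\le \tilde\tau r^{-\alpha}I_r(q)$. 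The paper packages exactly this mechanism by symmetrizing, setting $k(w)=\int_0^\tau\gamma_c(0,0,u,|w|)\,du$, proving $k$ symmetrically decreasing via the same scaling, recording $\int_\RR k=2\tau$ (the symmetrization doubles the mass; this is the sole source of the constant $2$, which is thus not tight) together with $K(r)=r^{\alpha-1}/(c^{1-\alpha}\Gamma(\alpha))$, and then invoking \cite[Lemma~4.2]{MR3200161}, a general result for convolutions with symmetrically decreasing kernels that encodes the near/far split plus the shell summation above.
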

\begin{proof}

Let $c>0$, $k(x)=\int_0^{\tau} \gamma_c(0,0,u,|x|)du$,
$K(x)=\int_0^{\infty}\gamma_c(0,0,u,|x|)du= |x|^{\alpha-1}/(c^{1-\alpha}\Gamma(\alpha))$,
$c_1=\int_\RR k(x)dx=2\tau$ and $c_2= rK(r)=r^{\alpha}/(c^{1-\alpha}\Gamma(\alpha))$.
By scaling, $\gamma_c(0,0,u,|x|)=|x|^{-1}\gamma_c(0,0,|x|^{-\alpha}u,1)$. By a change of variables, $k$ is  symmetrically decreasing. The result then follows from \cite[Lemma 4.2]{MR3200161}.\end{proof}
A direct consequence is that for every $\alpha$-stable subordinator and for all $s<t$, $x<y$ and $h>0$ we have
\begin{align*}
\int_s^t \int_{\RR} &\Big[ \gamma_{b-a}(s,x,u,z)+ \gamma_a(u,z,t,y) \Big]q(z)\,dzdu \\
&\le I_{h^{1/\alpha}}(q)\left[\frac1{\Gamma(\alpha)} \frac{a^{1-\alpha}+(b-a)^{1-\alpha}}{[a(b-a)]^{1-\alpha}}+  \frac{4(t-s)}{h}\right] \,.
\end{align*}
For $\alpha=1/2$ we may use Theorem \ref{thm:4Gp} to get
for all $s<t$, $x<y$ and $h>0$,
\begin{align*}
&\int_s^t \int_{\RR} \rho_b(s,x,u,z)q(z)\rho_a(u,z,t,y)\,dzdu \\
&\le D I_{h^2}(q)\left[\frac1{\Gamma(1/2)} \frac{\sqrt{a}+\sqrt{b-a}}{\sqrt{a(b-a)}}+  \frac{4(t-s)}{h}\right] \rho_a(s,x,t,y)\,.
\end{align*}
\begin{corollary}
Let $q\colon\RR\to\RR$ be such that $I_{h^2}(q)<\infty$ for some $h>0$.
Then $q\in\pN(\rho_b,\rho_a,(b/a)^{1/2},\eta,Q)$ with
\begin{align*}
&\eta= D I_{h^2}(q)\left(\sqrt{a}+\sqrt{b-a}\right)/\left(\Gamma(1/2) \sqrt{a(b-a)}\right),\\
&Q(s,t)= 4D I_{h^2}(q)  (t-s)/h\,.
\end{align*}
\end{corollary}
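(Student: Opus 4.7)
The plan is to read the corollary as a direct bookkeeping exercise on top of the displayed inequality that precedes it, which already combines Theorem~\ref{thm:4Gp} with the preceding lemma at $\alpha=1/2$. To certify membership in $\pN(\rho_b,\rho_a,(b/a)^{1/2},\eta,Q)$ I would verify, in order, the three defining requirements: the pointwise domination \eqref{eq:pCtp}, the relative-boundedness integral inequality \eqref{def:coeNs}, and the regularity/superadditivity of $Q$.

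For the first item, \eqref{eq:pCtp} with $C=(b/a)^{1/2}$ is immediate from the dilation comparison \eqref{ineq:pb}, since $0<a<b$. For the second item, one may assume $x<y$ because $\rho_c(s,x,t,y)$ vanishes for $y\le x$ (the density is supported on positive increments), so both sides of \eqref{def:coeNs} are $0$ otherwise. Under $x<y$ the displayed bound just before the corollary,
\begin{align*}
\int_s^t\!\!\int_{\RR} \rho_b(s,x,u,z)q(z)\rho_a(u,z,t,y)\,dz\,du
\le D\,I_{h^2}(q)\!\left[\frac{1}{\Gamma(1/2)}\,\frac{\sqrt{a}+\sqrt{b-a}}{\sqrt{a(b-a)}}+\frac{4(t-s)}{h}\right]\!\rho_a(s,x,t,y),
\end{align*}
is by construction of the form $[\eta+Q(s,t)]\rho_a(s,x,t,y)$ with precisely the $\eta$ and $Q$ displayed in the statement. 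For the third item, $Q(s,t)=4D\,I_{h^2}(q)(t-s)/h$ is additive in time, hence superadditive, and jointly continuous, hence regular in the sense required. Finiteness of $\eta$ and $Q(s,t)$ follows from the assumption $I_{h^2}(q)<\infty$, and both are non-negative because $q\ge 0$ and $D\ge 0$.

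No step is really an obstacle here: the heavy lifting was done in Theorem~\ref{thm:4Gp} (the 4G inequality for $\rho_c$) and in the $\alpha=1/2$ specialization of the $\alpha$-stable Kato-type lemma preceding the corollary. The only subtlety to point out explicitly is the reduction to $x<y$ via the support of the inverse Gaussian density; once that is noted, the corollary is obtained by matching constants.
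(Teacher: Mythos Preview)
Your proposal is correct and matches the paper's intent: the corollary is stated immediately after the displayed bound and is meant to be read off from it, together with \eqref{ineq:pb} for the domination $\rho_b\le (b/a)^{1/2}\rho_a$ and the obvious regularity/superadditivity of the linear $Q$. The reduction to $x<y$ via the support of the inverse Gaussian density is the only point worth making explicit, and you have done so.
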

Summarizing this section, we see that 4G for the inverse Gaussian subordinator yields \eqref{def:coeNs} for a large class of functions $q$ characterized by simpler Kato-type conditions, and then $\tp$ satisfies \eqref{ineq:thm1a} and Lemma~\ref{cor:fst}.

\section{Relative boundedness for subordinators with transition density}\label{sec:Kato}

In this section we consider a
general subordinator with transition density $p$.
Thus, $p$ is space-time homogeneous, $p(s,x,t,y)=0$ whenever $t\leq s$ or $y\leq x$,
and $p(s,x,t,y)> 0$ otherwise.
We first discuss time-independent functions $q$, aiming at the condition~\eqref{eq:kk_1}.

We denote, as usual, $||f||_{\infty}= {\rm ess} \sup_{x\in\RR} |f(x)|$.
Let functions $(\phi_j)_{j\in\NN}$ be an approximation to identity in $L^1(\RR)$, that is real-valued on $\RR$ with the following properties:
\begin{align}
&\label{pr:2} \phi_j \geq 0 \mbox{\quad and\quad} \int_{\RR}\phi_j(z) dz=1\,,\\
&\label{pr:4} \forall_{\delta>0} \exists_{j_0\in\NN} \forall_{j\geq j_0} \quad\mbox{supp}(\phi_j) \subset (-\delta,\delta).
\end{align}

\begin{lemma}\label{lem:identity}
Let $f\in L^1_{loc}(\RR)$. If\, $\sup_{n\in\NN}||\phi_n * f ||_{\infty}<\infty$, then $f\in L^{\infty}(\RR)$.
\end{lemma}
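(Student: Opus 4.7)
The plan is to recover $f$ as an almost-everywhere (subsequential) limit of the convolutions $\phi_n*f$ and then transfer the uniform essential bound $M:=\sup_n\|\phi_n*f\|_\infty$ to $f$ itself.

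First, I would fix an arbitrary $R>0$ and localise. By \eqref{pr:4}, there exists $j_0$ such that $\supp(\phi_j)\subset(-1,1)$ for $j\geq j_0$. For such $j$ and $x\in[-R,R]$, the value $(\phi_j*f)(x)=\int \phi_j(y)f(x-y)\,dy$ depends only on the restriction $g:=f\cdot\1_{[-R-1,R+1]}$, which belongs to $L^1(\RR)$ since $f\in L^1_{\rm loc}(\RR)$.

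Second, I would invoke the standard approximate-identity result: for $g\in L^1(\RR)$ and any family $(\phi_j)$ satisfying \eqref{pr:2} and \eqref{pr:4}, one has $\phi_j*g\to g$ in $L^1(\RR)$. Consequently $\phi_j*f\to f$ in $L^1([-R,R])$. Extract a subsequence $(j_k)$ with $(\phi_{j_k}*f)(x)\to f(x)$ for almost every $x\in[-R,R]$.

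Third, the hypothesis yields $|(\phi_{j_k}*f)(x)|\leq M$ for almost every $x$, so passing to the pointwise limit gives $|f(x)|\leq M$ for almost every $x\in[-R,R]$. Since $R$ was arbitrary, $f\in L^\infty(\RR)$ with $\|f\|_\infty\leq M$.

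The argument is routine once one notices that the only real ingredient needed is the classical $L^1$-continuity of translation, which upgrades the \emph{local} integrability assumption to the desired approximation; the mild point is the localisation step ensuring that $L^1_{\rm loc}$ (rather than $L^1$) suffices.
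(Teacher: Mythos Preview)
Your proof is correct and follows essentially the same approach as the paper: localise by truncating $f$ to a compact set so that it lies in $L^1$, use the approximate-identity property to get $L^1$ convergence of the convolutions, pass to an a.e.-convergent subsequence, and transfer the bound $M$ to $f$. The only cosmetic difference is your choice of windows $[-R,R]\subset[-R-1,R+1]$ versus the paper's $\{|x|<R-\delta\}\subset\{|z|<R\}$.
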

\begin{proof}
Let $0<\delta<R$ and $M=\sup_{n\in\NN}||\phi_n * f ||_{\infty}$. Choose $j_0\in\NN$ according to \eqref{pr:4}. Since the functions $f1_{|z|< R} * \phi_n$ converge to $f1_{|x|<R}\in L^1(\RR)$ in the $L^1$ norm, a subsequence $f1_{|z|< R} * \phi_{n_k}$ converges almost surely to $f1_{|x|<R}$. For $n_k \geq j_0$,
\begin{align*}
f1_{|z|< R} * \phi_{n_k}(x) = f * \phi_{n_k}(x)\,,\qquad {\rm if} \quad |x|<R-\delta.
\end{align*}
Thus for almost all $|x|<R-\delta$,
\begin{align*}
|f(x)|=\lim_{k\to \infty} |f*\phi_{n_k}|\leq M\,.
\end{align*}
Therefore $|f(x)|\leq M$ for almost all $x\in\RR$.
\end{proof}
\begin{lemma}\label{lem:L1loc}
Assume that for some $s<t$ and all $x\in\RR$,
\begin{align*}
\int_s^t \int_{\RR} p(s,x,u,z)q(z)\,dzdu \leq M\,.
\end{align*}
Then $q\in L_{loc}^1(\RR)$.
\end{lemma}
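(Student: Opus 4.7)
My plan is to exploit the space-time homogeneity of the subordinator to rewrite the hypothesis as a single convolution-type bound against the time-averaged density kernel $K(y):=\int_0^{t-s} p_v(y)\,dv$, where $p_v$ denotes the subordinator's density at time $v$, so that $p(s,x,u,z)=p_{u-s}(z-x)$. This $K$ is nonnegative, vanishes on $(-\infty,0]$, is strictly positive on $(0,\infty)$, and satisfies $\int_\RR K(y)\,dy=t-s$ by Tonelli since each $p_v$ is a probability density. With $K$ in hand the hypothesis becomes
\[
\int_\RR q(z)\,K(z-x)\,dz\le M\quad\text{for every }x\in\RR.
\]

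Next I would convert this family of pointwise bounds into a local $L^1$ bound on $q$ by averaging in $x$. Pick a compact interval $[c,d]\subset\RR$ and choose $N>0$ large enough that $\kappa:=\int_0^N K(w)\,dw>0$; this is possible because $\int_\RR K=t-s>0$ and $K\ge 0$. Integrating the displayed inequality over $x\in[c-N,d]$ and swapping the order of integration by Tonelli yields
\[
\int_\RR q(z)\,H(z)\,dz\le (d-c+N)\,M,\qquad H(z):=\int_{z-d}^{z-c+N}K(w)\,dw.
\]

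The step I expect to carry the real content is the uniform lower bound $H(z)\ge\kappa$ on $[c,d]$: for $z\in[c,d]$ we have $z-d\le 0$ and $z-c+N\ge N$, so since $K$ is supported in $[0,\infty)$ and nonnegative,
\[
H(z)=\int_0^{\,z-c+N}K(w)\,dw\ge \int_0^N K(w)\,dw=\kappa.
\]
Dividing the preceding inequality by $\kappa$ and restricting the outer integral to $[c,d]$ gives $\int_c^d q(z)\,dz\le (d-c+N)M/\kappa<\infty$, and since $[c,d]$ was arbitrary we conclude $q\in L^1_{loc}(\RR)$. The reason one cannot simply fix a single $x=x_0$ and use positivity of $K(\cdot-x_0)$ on $(x_0,\infty)$ is that, without any regularity assumption on $p_v$, the kernel $K$ need not be bounded below on a given compact set; averaging in $x$ turns the integrated positivity $\int K>0$ into the required pointwise lower bound on $H$.
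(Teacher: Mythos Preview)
Your proof is correct. Both your argument and the paper's integrate the hypothesis in $x$ to manufacture a uniform lower bound, but the technical cores differ. The paper convolves with a bump function $\varphi\in C_0(\RR)$ and invokes the strong continuity of the Feller semigroup $(P_t)$ on $C_0(\RR)$: since $\|P_u\varphi-\varphi\|_\infty\to 0$ as $u\to 0$, one gets $P_{u-s}\varphi(x_0-z)\ge 1/2$ on a fixed subinterval for all sufficiently small $u-s$, and hence $\int_{x_0-1/2}^{x_0}q\le 2M/\varepsilon$. Your route avoids the Feller property entirely: by first integrating in time to form $K$ and then averaging over $x$ on a window of length $d-c+N$, you replace the pointwise lower bound on $K$ (which, as you correctly note, may fail) by the automatic lower bound $H(z)\ge\int_0^N K=\kappa$, using only that each $p_v$ is a probability density. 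Your approach is thus slightly more elementary and more robust—it would apply verbatim to any space-time homogeneous transition density supported on a half-line, without assuming that the semigroup acts strongly continuously on $C_0$. The paper's approach, on the other hand, yields an explicit bound on intervals of fixed length $1/2$ with a constant depending only on the semigroup's modulus of continuity at $0$, which is perhaps more transparent if one already has the Feller structure at hand. One minor remark: your claim that $K$ is ``strictly positive on $(0,\infty)$'' is not actually used in the argument (only $\int_\RR K>0$ is), so the proof stands regardless of pointwise-versus-a.e.\ subtleties in the density.
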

\begin{proof}
Let $\varphi\in C_0(\RR)$ be such that $\varphi\geq 0$, $\varphi =1$ on $[0,1/2]$ and $\int_{\RR}\varphi(x)\,dx=1$. For arbitrary fixed $x_0\in\RR$ we have
\begin{align*}
M&\geq \int_s^t \int_{\RR} \int_{\RR} \varphi(x_0-x)p(s,x,u,z)dx\,q(z)\,dzdu\\
&=\int_s^t \int_{\RR} P_{u-s}\,\varphi (x_0-z) q(z)\,dzdu \geq (\varepsilon/2)\int_{x_0-1/2}^{x_0} q(z)\,dz\,,
\end{align*}
where $0< \varepsilon\leq t-s$ is such that $||P_u \varphi - \varphi ||_{\infty}\leq 1/2$ for $u\leq \varepsilon$.
\end{proof}
Lemma~\ref{lem:L1loc} is generalized to arbitrary L\'evy processes in $\Rd$ \cite[Lemma~3.7]{2015arXiv150305747G}.
\begin{theorem}
Assume that for some $s<t$,
\begin{align*}
\sup_{x<y}  \int_s^t \int_{\RR} \frac{p(s,x,u,z)p(u,z,t,y)}{p(s,x,t,y)} q(z)\,dzdu <\infty\,.
\end{align*}
Then $q\in L^{\infty}(\RR)$.
\end{theorem}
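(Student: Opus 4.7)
The strategy is to exploit the space-time homogeneity of the subordinator to rewrite the hypothesis as a smoothed-convolution estimate for $q$ against a family of probability densities concentrating at $0$, and then to extract an $L^\infty$ bound via Fatou's lemma. Write $\bar p(r,v):=p(0,0,r,v)$, so that $p(s,x,t,y)=\bar p(t-s,y-x)$ with $\bar p(r,v)=0$ unless $r,v>0$. Set $\tau=t-s$ and let $M$ denote the supremum from the hypothesis. Changing variables $r=u-s$, $v=z-x$, $w=y-x$ turns the assumption into
\begin{equation*}
\int_0^\tau\!\int_0^w \bar p(r,v)\,\bar p(\tau-r,w-v)\,q(x+v)\,dv\,dr \leq M\,\bar p(\tau,w),\quad x\in\RR,\ w>0.
\end{equation*}

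Next, define $h_w(v):=\int_0^\tau \bar p(r,v)\,\bar p(\tau-r,w-v)\,dr$. By Chapman--Kolmogorov applied to the inner integral, $\int_0^w h_w(v)\,dv=\tau\,\bar p(\tau,w)$, so $\phi_w(v):=h_w(v)/(\tau\,\bar p(\tau,w))$ is a probability density supported in $(0,w)$, and the previous display becomes
\begin{equation*}
\int_0^w \phi_w(v)\,q(x+v)\,dv \leq M/\tau,\quad x\in\RR,\ w>0.
\end{equation*}
For any nonnegative $g\in C_c(\RR)$, multiplying by $g(x)$, integrating in $x$, and using Tonelli together with the substitution $y=x+v$ transforms this into
\begin{equation*}
\int_{\RR} q(y)\,(\phi_w*g)(y)\,dy \leq (M/\tau)\int_\RR g(x)\,dx,
\end{equation*}
where $(\phi_w*g)(y)=\int\phi_w(v)g(y-v)\,dv$.

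Finally, since $g$ is uniformly continuous and $\phi_w$ is a probability measure supported in $(0,w)$, one has $(\phi_w*g)(y)\to g(y)$ pointwise as $w\to 0^+$. Fatou's lemma applied to the nonnegative integrands $q\cdot(\phi_w*g)$ yields $\int qg\leq (M/\tau)\int g$ for every such $g$; approximating the indicator of a bounded interval $(a,b)$ from below by continuous $g$'s gives $q\in L^1_{\mathrm{loc}}(\RR)$ together with $\int_a^b q\leq (M/\tau)(b-a)$, whence the Lebesgue differentiation theorem delivers $q\leq M/\tau$ almost everywhere. The one point requiring attention is that we never invoke continuity of $\bar p$: all we use is measurability, nonnegativity, positivity on $(0,\infty)^2$ (so that $\phi_w$ is well defined), and Chapman--Kolmogorov. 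As a bonus, the proof yields the honest bound $\|q\|_\infty\leq M/(t-s)$.
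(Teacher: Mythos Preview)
Your proof is correct. The core observation is the same as the paper's: by space-time homogeneity the hypothesis says that the convolution of $q$ against the bridge densities $\phi_w$ (an approximate identity supported in $(0,w)$) is uniformly bounded by $M/(t-s)$. The execution, however, differs. The paper first establishes $q\in L^1_{\mathrm{loc}}$ by a separate semigroup argument (its Lemma on $L^1_{\mathrm{loc}}$), then invokes a general lemma to the effect that if $f\in L^1_{\mathrm{loc}}$ and $\sup_n\|\phi_n*f\|_\infty<\infty$ for an approximate identity $(\phi_n)$, then $f\in L^\infty$; that lemma is proved via $L^1$-convergence of $\phi_n*f$ along a subsequence. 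You instead pass to the dual side, testing against nonnegative $g\in C_c$, and use Fatou plus Lebesgue differentiation; this avoids the two auxiliary lemmas, derives $q\in L^1_{\mathrm{loc}}$ as a by-product, and delivers the sharp constant $\|q\|_\infty\le M/(t-s)$ in one pass. The paper's route has the advantage of isolating reusable lemmas; yours is more self-contained.
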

\begin{proof}
By the assumption there is $M'>0$ such that for  some fixed $s<t$,
\begin{align*}
\int_s^t \int_{\RR} \frac{p(s,x,u,z)p(u,z,t,y)}{p(s,x,t,y)} q(z)\,dzdu \leq M'\,,\qquad  x<y\,.
\end{align*}
By Lemma \ref{lem:L1loc}, $q\in L^1_{loc}(\RR)$. For $s<t$ and $n\in\NN$, we let
\begin{align*}
\phi_n(z)= \frac{1}{t-s}\int_s^t\frac{p(s,-1/n,u,-z)p(u,-z,t,1/n)}{p(s,-1/n,t,1/n)}\,du\,,\qquad |z|<1/n\,,
\end{align*}
and $\phi_n(z)=0$ for $|z|\geq 1/n$. Clearly,
$\phi_n$ satisfies conditions \eqref{pr:2} and \eqref{pr:4}. Furthermore, for all $x\in\RR$,
\begin{align*}
\phi_n* q(x) = \frac{1}{t-s}\int_s^t \int_{\RR} \frac{p(s,x-1/n,u,z)p(u,z,t,x+1/n)}{p(s,x-1/n,t,x+1/n)}q(z)\,dzdu\,.
\end{align*}
Thus, $\sup_{n\in\NN} ||\phi_n* q||_{\infty}\leq M'/(t-s)=M<\infty$. Lemma \ref{lem:identity} ends the proof.
\end{proof}

\begin{corollary}\label{cor:2}
Let $q(u,z)=q(z)$.
Then $q$ satisfies \eqref{eq:kk_1} if and only if $||q||_\infty<\infty$.
If there are $s<t$ and $C<\infty$ such that
$
\tp (s,x,t,y)\leq C\, p(s,x,t,y)
$
for all $x<y$, then 
$||q||_\infty<\infty$.
\end{corollary}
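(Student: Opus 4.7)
The plan is to reduce both claims of the corollary to the preceding theorem, which supplies the only nontrivial input. Throughout I use the section's standing assumption that $p(s,x,t,y)>0$ for $s<t$ and $x<y$, so the bridge ratios $p(s,x,u,z)p(u,z,t,y)/p(s,x,t,y)$ are well defined.

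For the first equivalence, the \emph{if} direction is immediate from Chapman--Kolmogorov~\eqref{assume1}: if $||q||_\infty<\infty$, then
\begin{align*}
\int_s^t\!\!\int_\RR p(s,x,u,z)\,q(z)\,p(u,z,t,y)\,dz\,du \leq ||q||_\infty\,(t-s)\,p(s,x,t,y),
\end{align*}
which is exactly \eqref{eq:kk_1} with $\eta=0$ and superadditive $Q(s,t)=||q||_\infty (t-s)$. For the \emph{only if} direction, assume \eqref{eq:kk_1}. Fix any $s<t$ and divide by $p(s,x,t,y)$ to obtain
\begin{align*}
\sup_{x<y}\int_s^t\!\!\int_\RR \frac{p(s,x,u,z)\,p(u,z,t,y)}{p(s,x,t,y)}\,q(z)\,dz\,du \leq \eta+Q(s,t) < \infty,
\end{align*}
which is precisely the hypothesis of the preceding theorem; it yields $q\in L^\infty(\RR)$.

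For the second assertion, suppose $\tp(s,x,t,y)\le C\,p(s,x,t,y)$ for some $s<t$, $C<\infty$, and all $x<y$. I would combine the perturbation identity~\eqref{eq:pfv} with the trivial lower bound $\tp\ge p$ (which holds because $q\ge 0$) to obtain, for every $x<y$,
\begin{align*}
(C-1)\,p(s,x,t,y) &\geq \tp(s,x,t,y)-p(s,x,t,y) \\
&= \int_s^t\!\!\int_\RR p(s,x,u,z)\,q(z)\,\tp(u,z,t,y)\,dz\,du\\
&\geq \int_s^t\!\!\int_\RR p(s,x,u,z)\,q(z)\,p(u,z,t,y)\,dz\,du.
\end{align*}
Dividing by $p(s,x,t,y)$ once more places us back under the hypothesis of the preceding theorem for this particular pair $(s,t)$, and we conclude $||q||_\infty<\infty$.

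The principal technical obstacle has already been dispatched in the preceding theorem, where the bridges $\phi_n(z)=\frac{1}{t-s}\!\int_s^t p(s,-1/n,u,-z)p(u,-z,t,1/n)/p(s,-1/n,t,1/n)\,du$ are shown to form an approximate identity with $\phi_n*q$ uniformly bounded, forcing $q\in L^\infty$ via the approximate-identity lemma. Given that result in hand, the present corollary is essentially a bookkeeping step: recognize the bridge-ratio integral inside the Khasminski-type bound~\eqref{eq:kk_1}, and use \eqref{eq:pfv} together with $\tp\ge p$ to extract the very same bridge-ratio bound from the hypothetical comparability $\tp\le Cp$.
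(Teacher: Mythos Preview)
Your proposal is correct and follows the intended route: the corollary has no written proof in the paper precisely because it is a direct consequence of the preceding theorem, and your argument supplies exactly those details. The ``if'' direction via Chapman--Kolmogorov with $Q(s,t)=\|q\|_\infty(t-s)$, the ``only if'' direction by dividing \eqref{eq:kk_1} through by $p(s,x,t,y)$, and the second assertion via $p\le\tp$ combined with \eqref{eq:pfv} to extract $p_1\le(C-1)p$, are all what the paper has in mind.
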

Corollary \ref{cor:2}  shows that the methods of \cite{MR3000465} 
cannot deliver estimates of
Schr\"odinger perturbations of transition densities $p$ of subordinators by unbounded time-independent $q$.
In contrast, we saw in Section~\ref{sec:p12s} that the methods based on  majorants $p^*$ and 4G inequality handle such situations.

If we allow $q$ to depend on time, 
the statements of the corollary are no longer valid. Indeed, let $q(u,z)=u_+^{-1/2}$, where $u_+=u\vee 0$. Then for all $s<t$ and $x<y$ and  transition densities $p$,
\begin{align*}
\int_s^t \int_{X} p(s,x,u,z)q(u,z)p(u,z,t,y)\,dzdu \leq 2 (t_+-s_+)^{1/2} p(s,x,t,y)\,.
\end{align*}
We see that this unbounded $q$ yields \eqref{eq:kk_1} and \eqref{eq:metgKpt}
for every $p$.

The next example builds on the ideas proposed in \cite[Example 4]{MR2507445}.
\begin{example}
Consider the second term $p_1$ of the perturbation series \eqref{def:tp} for $\tp$. Let 
\begin{align*}
\sup_{\substack{s\leq u \leq t\\ x \leq z \leq y}} q(u,z)\leq \eta/(t-s) \,,
\end{align*}
for some $\eta\geq 0$ and for all $s<t$, $x<y$ such that $(s,x),(t,y)\in F:=\{(u,z)\colon q(u,z)>0\}$.
Then we claim that for all $s<t$ and $x<y$,
\begin{align}\label{ineq:exunif}
p_1(s,x,t,y)\leq \eta\, p(s,x,t,y)\,.
\end{align}
For the proof we  consider a Borel non-decreasing function $\omega\colon [s,t] \to \RR$, $s<t$, such that $\omega(s)=x<y=\omega(t)$, and let
 $T(\omega)=\{ u \colon s\leq u\leq t,\,(u,\omega(u))\in F \}$.
If $T(\omega)$ is empty,
then
$$
\int_s^t q(u,\omega(u))\,du =0\leq \eta\,.
$$
Otherwise we consider $\sigma=\inf\{u\colon u\in T(\omega) \}$ and $\tau=\sup\{ u\colon u\in T(\omega)\}$. There are $s_n\le t_n$ such that $(s_n, \omega(s_n)), (t_n, \omega(t_n))\in F$, $s_n \downarrow \sigma$ and $t_n\uparrow \tau$, hence
\begin{align*}
\int_s^t q(u,\omega(u))\,du &= \int_{\sigma}^{\tau} q(u,\omega(u))\,du =\lim_{n\to \infty} \int_{s_n}^{t_n} q(u,\omega(u))\,du\\
&\leq \lim_{n\to \infty} (t_n-s_n) \!\!\!\!\!\!\!  \sup_{\substack{s_n\leq u \leq t_n\\ \omega(s_n) \leq z \leq \omega(t_n)}}\!\!\!\!\!\!\!\! q(u,z)  \leq \eta\,.
\end{align*}
Finally, let $\{Y_u\}_{u\geq 0}$ be the subordinator. Given $s<t$, $x<y$ we denote by $\{Z_u\}_{s\leq u\leq t}$ the bridge corresponding to $\{Y_u\}_{u\geq 0}$, which starts from $x$ at time $s$ and reaches $y$ at time $t$.
Since the trajectories of $\{Z_u\}_{u\geq 0}$ are almost surely non-decreasing we have for all $s<t$, $x<y$,
\begin{align*}
p_1(s,x,t,y)/p(s,x,t,y)&=\mathbb{E}_{s,x}^{t,y}\left[\int_s^t q(u,Z_u)\right] du\leq \mathbb{E}_{s,x}^{t,y}\big[\, \eta\, \big] =\eta\,,
\end{align*}
as claimed.

Typical applications are $q(u,z)=\eta z 1_{(0,1/u)}(z)$, cf. \cite[Example 4]{MR2507445}, and $q(u,z)=\eta z^2 1_{F}(u,z)$, where $F=\bigcup_{n=1}^{\infty} \left(1/(n+1),n \right)\times \left(n-1,n \right)$. Both functions tend to infinity when time goes to zero and the space variable grows correspondingly.
\end{example}

We next 
show that the estimate \eqref{ineq:exunif} 
cannot be improved.
\begin{example}
We 
define $q(u,z)=\eta z 1_{(0,1/u)}(z)$, $\eta>0$. Let $\varepsilon<\eta$. We claim that there is no superadditive $Q$ such that
\begin{align}\label{ineq:exfail}
p_1(s,x,t,y)\leq \big[\varepsilon + Q(s,t)\big] p(s,x,t,y)\,.
\end{align}
Indeed, 
by \cite[Lemma 5.3]{MR3200161} 
we may assume that $Q$ is regular
superadditive. Thus there is $t$ such that $\big[\varepsilon + Q(0,t)\big]<(\varepsilon+\eta)/2$. On the other hand for $x:=(1+\varepsilon/\eta)/(2t)<y:=1/t$ we have
\begin{align*}
p_1(s,x,t,y)&=\int_0^t\int_x^y p(s,x,u,z)q(u,z)p(u,z,t,y)\,dzdu\\
&\geq \eta x \int_0^t \int_x^y p(s,x,u,z)p(u,z,t,y)\,dzdu\\
&\geq \eta x t \,p(s,x,t,y)= \big[(\eta +\varepsilon)/2\big]\, p(s,x,t,y) \,,
\end{align*}
which is a contradiction.
\end{example}

\section{Appendix}\label{sec:uniq}
In this section we prove \eqref{eq:fsLp} and its analogues in the setting of general semigroup theory.
We consider a Banach space $(Y,||\cdot||)$. Let $T=(T_t)_{t\ge0}$ be a strongly continuous semigroup of linear operators on $Y$. Let $L$  be the corresponding infinitesimal generator with domain $D(L)$ \cite[IX]{MR617913}.
\begin{theorem}\label{thm:general}
Let $\ff \colon \RR \to D(L)$ be such that
\begin{align}
&\label{assum:1} \mbox{$t\mapsto \ff(t)$ is differentiable in $(Y,||\cdot||)$,}\\
&\label{assum:2} \mbox{$t\mapsto\ff'(t)$ is continuous in $(Y,||\cdot||)$,}\\
&\label{assum:3} \mbox{$t\mapsto L\ff(t)$ is continuous in $(Y,||\cdot||)$,}\\
&\label{assum:4} \mbox{$t\mapsto\ff(t)$ has compact support in $\RR$.}
\end{align}
Then
\begin{align}\label{eq:general}
\int_s^{\infty} T_{u-s} \Big[\ff'(u)+L\ff(u) \Big] du=-\ff(s)\,,\qquad s\in \RR\,,
\end{align}
where the integral is the Riemann 
integral of a Banach space valued function.
\end{theorem}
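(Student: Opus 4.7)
The natural strategy is to recognize the integrand as a total derivative. Concretely, I would show that the map $u\mapsto T_{u-s}\ff(u)$ is differentiable on $\RR$ with
\[
\frac{d}{du}\,T_{u-s}\ff(u)=T_{u-s}\bigl[\ff'(u)+L\ff(u)\bigr],
\]
and then apply the Banach-space fundamental theorem of calculus. The identity \eqref{eq:general} will follow because $\ff$ has compact support, so the boundary term at $+\infty$ vanishes and the boundary term at $s$ gives $-T_0\ff(s)=-\ff(s)$.

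First I would establish the product/chain rule above. Write
\[
\frac{T_{u+h-s}\ff(u+h)-T_{u-s}\ff(u)}{h}
=T_{u+h-s}\,\frac{\ff(u+h)-\ff(u)}{h}+\frac{T_{u+h-s}-T_{u-s}}{h}\,\ff(u).
\]
By \eqref{assum:1} and the local boundedness of $\{T_v\}$ on compacts, together with strong continuity, the first term converges to $T_{u-s}\ff'(u)$. For the second term, using $\ff(u)\in D(L)$ and the semigroup identity $T_{u+h-s}-T_{u-s}=T_{u-s}(T_h-\mathrm{Id})$ (for $h>0$; a symmetric argument handles $h<0$ using $u-s>0$ eventually and the semigroup property), standard semigroup theory \cite[IX]{MR617913} gives
\[
\tfrac1h(T_h-\mathrm{Id})\ff(u)\to L\ff(u),
\]
and then closedness/continuity of $T_{u-s}$ yields the limit $T_{u-s}L\ff(u)$. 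The asymmetric case near $u=s$ is handled by working with one-sided derivatives; \eqref{assum:3} makes the right-hand side continuous in $u$.

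Next I would note that the integrand $u\mapsto T_{u-s}[\ff'(u)+L\ff(u)]$ is continuous in $(Y,\|\cdot\|)$ by \eqref{assum:2}, \eqref{assum:3}, strong continuity, and local boundedness of $T$, hence Riemann integrable on compact subintervals. Choose $M>s$ so large that $\supp\ff\subset(-\infty,M]$ (possible by \eqref{assum:4}); then the integrand vanishes for $u\ge M$, so the improper integral in \eqref{eq:general} reduces to the proper integral on $[s,M]$. By the Banach-valued fundamental theorem of calculus,
\[
\int_s^M T_{u-s}\bigl[\ff'(u)+L\ff(u)\bigr]\,du=T_{M-s}\ff(M)-T_{0}\ff(s)=0-\ff(s)=-\ff(s),
\]
since $\ff(M)=0$ by the choice of $M$. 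This is \eqref{eq:general}.

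The main technical hurdle is the justification of the product rule, specifically the passage
$\tfrac1h(T_h-\mathrm{Id})\ff(u)\to L\ff(u)$ uniformly enough to combine cleanly with the first term; this is where the assumption $\ff(u)\in D(L)$ (not merely $\ff\in Y$) is essential, and where continuity of $u\mapsto L\ff(u)$ \eqref{assum:3} is used to ensure the derivative is continuous, guaranteeing genuine differentiability rather than only a one-sided limit. Everything else — Riemann integrability, truncation at $M$, the boundary evaluation — is routine once the derivative identity is in hand.
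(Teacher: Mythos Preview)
Your proposal is correct and follows essentially the same approach as the paper: establish the product rule $\frac{d}{du}T_{u-s}\ff(u)=T_{u-s}[\ff'(u)+L\ff(u)]$ (the paper isolates this as a separate lemma, using the same add-and-subtract decomposition you describe), then invoke the Banach-valued fundamental theorem of calculus together with compact support to evaluate the boundary terms. The only cosmetic difference is that the paper first substitutes $u\mapsto u+s$ so as to work with $T_u\ff(u+s)$ on $[0,\infty)$, which cleanly avoids the one-sided issue at $u=s$ that you flag.
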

Theorem~\ref{thm:general} applies, e.g.,  to
$\ff(t)=f(t)\ff_0$ with $\ff_0\in D(L)$ and $f\in C_c^1(\RR)$.
Theorem \ref{thm:general} follows from  two auxiliary lemmas.
\begin{lemma}\label{lem:diff}
If $\ff$
satisfies \eqref{assum:1}, then $t\mapsto T_t \ff(t)$ is differentiable in $(Y,||\cdot||)$ and
\begin{align*}
\frac{d}{dt}\, T_t \ff(t)= T_t \ff'(t) + T_t L\ff(t)\,,\quad t\ge 0\,.
\end{align*}
\end{lemma}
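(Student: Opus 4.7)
The plan is to compute the derivative of $t \mapsto T_t\ffi(t)$ from the definition by splitting the difference quotient into two natural pieces, one isolating the variation of $\ffi$ and the other isolating the variation of $T$. For $h$ such that $t+h\geq 0$ write
\begin{align*}
\frac{T_{t+h}\ffi(t+h)-T_t\ffi(t)}{h}
= T_{t+h}\!\left[\frac{\ffi(t+h)-\ffi(t)}{h}\right]
+ \frac{T_{t+h}-T_t}{h}\,\ffi(t).
\end{align*}
The first term should converge to $T_t\ffi'(t)$ and the second to $T_t L\ffi(t)$.

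For the first term I would further decompose
\begin{align*}
T_{t+h}\!\left[\frac{\ffi(t+h)-\ffi(t)}{h}\right]
= T_{t+h}\!\left[\frac{\ffi(t+h)-\ffi(t)}{h}-\ffi'(t)\right]+T_{t+h}\ffi'(t).
\end{align*}
By \eqref{assum:1}, the bracketed vector tends to $0$ in $Y$; since $T$ is strongly continuous, the usual consequence (via the uniform boundedness principle) gives a bound $\|T_s\|\leq M e^{\omega s}$ on any compact interval, so $T_{t+h}$ applied to a vector going to zero still goes to zero. The remaining piece $T_{t+h}\ffi'(t)\to T_t\ffi'(t)$ by strong continuity of $T$ at $t$.

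For the second term I use the crucial hypothesis $\ffi(t)\in D(L)$. For $h>0$, the semigroup law yields $T_{t+h}=T_tT_h$, so
\begin{align*}
\frac{T_{t+h}-T_t}{h}\,\ffi(t)=T_t\!\left[\frac{T_h-I}{h}\ffi(t)\right]\xrightarrow[h\to 0^+]{} T_tL\ffi(t)
\end{align*}
by the definition of the generator and the boundedness of $T_t$. For $h<0$ (needed when $t>0$), I would instead write $T_t=T_{t+h}T_{-h}$ to obtain
\begin{align*}
\frac{T_{t+h}\ffi(t)-T_t\ffi(t)}{h}=T_{t+h}\!\left[\frac{T_{-h}\ffi(t)-\ffi(t)}{-h}\right],
\end{align*}
whose bracket converges to $L\ffi(t)$ as $-h\to 0^+$; combining with strong continuity of $T_{t+h}$ and local boundedness of $\|T_\cdot\|$ produces the required limit $T_t L\ffi(t)$.

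The main obstacle is that in each term we are passing to the limit in an expression of the shape $T_{s(h)}v(h)$ with both $s(h)$ and $v(h)$ varying with $h$; the resolution is precisely the standard observation that strong continuity together with local boundedness of $\|T_s\|$ makes such double limits legitimate. Once those two summands are shown to converge to $T_t\ffi'(t)$ and $T_tL\ffi(t)$, the identity of the lemma follows immediately.
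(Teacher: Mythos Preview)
Your proof is correct and follows essentially the same approach as the paper: the identical three-term decomposition of the difference quotient, the same use of the exponential bound $\|T_s\|\le Me^{\omega s}$ to control $T_{t+h}$ applied to a vanishing vector, and the same appeal to strong continuity and the definition of $L$ for the remaining terms. You spell out the $h<0$ case for the term $(T_{t+h}-T_t)h^{-1}\ffi(t)$ more explicitly than the paper does, but this is just added detail rather than a different route.
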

For $t=0$ the derivative is understood as the right-hand derivative. The lemma is a version of the differentiation rule for products.
\begin{proof}[Proof of Lemma~\ref{lem:diff}]
Let $h\neq 0$ $(h>0$ if $t=0$) and $h \to 0$. Clearly,
\begin{align*}
&\frac{ T_{t+h} \ff(t+h) - T_t \ff(t)}{h} \\
&= T_{t+h} \ff'(t) +  T_{t+h}\left( \frac{\ff(t+h)-\ff(t)}{h}- \ff '(t) \right)+\left( \frac{T_{t+h}-T_t}{h} \right) \ff (t)\,.
\end{align*}
For some $M, \omega\ge 0$, we  have $||T_t||\le M e^{\omega t}$, $t\ge 0$ \cite{MR617913}. The lemma follows:
\begin{align*}
\left\| T_{t+h} \left( \frac{\ff (t+h)-\ff(t)}{h}- \ff '(t) \right) \right\| \le M e^{\omega (t+h)} \left\| \frac{\ff (t+h)-\ff (t)}{h}- \ff '(t) \right\|\to 0\,.
\end{align*}
\end{proof}

Let $a, b\in \RR$, $a<b$. We write $\ff \in C^1([a,b],Y)$ if $\ff\colon [a,b]\to Y$ and
\eqref{assum:1} and \eqref{assum:2} hold, with one-sided derivatives at the endpoints $a$ and $b$.
Here is the fundamental theorem of calculus for Riemann type Banach space integrals
(see \cite[Lemma 1.1.4]{MR838085} or \cite[Lemma 2.3.24]{MR1873235}).

\begin{lemma}\label{lem:Jakob}
If $\psi\in C^1([a,b],Y)$, then
$\int\limits_a^b\frac{d}{du}\left[ \psi(u) \right]du=\psi(b)-\psi(a)$.
\end{lemma}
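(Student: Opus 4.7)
The plan is to reduce the Banach-valued statement to the classical scalar fundamental theorem of calculus by a Hahn--Banach duality argument. The absence of a mean value theorem in general Banach spaces is the one real obstruction to a direct telescoping/Riemann-sum proof, but duality sidesteps it cleanly.

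First I would check that the Riemann integral $\int_a^b \psi'(u)\,du$ actually exists as a limit of Riemann sums in $(Y,\|\cdot\|)$. Since $\psi\in C^1([a,b],Y)$, the derivative $\psi'$ is continuous on the compact interval $[a,b]$, hence uniformly continuous and bounded. A standard Cauchy-criterion argument then shows that the Riemann sums $\sum_{k} \psi'(\xi_k)(t_k-t_{k-1})$ form a Cauchy net as the mesh of the partition tends to zero, and completeness of $Y$ yields convergence. This step is routine and I would not dwell on it.

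The key step is the following duality trick. For every continuous linear functional $\ell\in Y^*$, consider the scalar-valued function $g_\ell(u):=\ell(\psi(u))$ on $[a,b]$. Continuity of $\ell$ together with \eqref{assum:1}--\eqref{assum:2} gives that $g_\ell\in C^1([a,b],\RR)$ with $g_\ell'(u)=\ell(\psi'(u))$. The classical fundamental theorem of calculus yields
\begin{equation*}
\ell(\psi(b))-\ell(\psi(a))=\int_a^b \ell(\psi'(u))\,du.
\end{equation*}
Because $\ell$ is bounded and linear, it commutes with the Riemann integral (pass it through the limit of Riemann sums), so the right-hand side equals $\ell\!\left(\int_a^b \psi'(u)\,du\right)$. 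Therefore
\begin{equation*}
\ell\!\left(\psi(b)-\psi(a)-\int_a^b \psi'(u)\,du\right)=0\qquad\text{for every }\ell\in Y^*.
\end{equation*}

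Finally, by the Hahn--Banach theorem, a vector in $Y$ annihilated by every element of $Y^*$ must vanish, so $\psi(b)-\psi(a)=\int_a^b \psi'(u)\,du$, which is the claim. The main conceptual point is simply that Hahn--Banach reduces a Banach-valued identity to its scalar projections, and there the classical FTC does all the work.
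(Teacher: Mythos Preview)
Your proof is correct. The Hahn--Banach duality argument is a clean and standard way to reduce the Banach-valued fundamental theorem of calculus to the scalar one, and each step (existence of the Riemann integral via uniform continuity of $\psi'$, commutation of bounded functionals with Riemann sums, and separation of points by $Y^*$) is sound.

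As for comparison: the paper does not actually prove this lemma. It merely states the result and points to external references (\cite[Lemma~1.1.4]{MR838085} and \cite[Lemma~2.3.24]{MR1873235}) for the proof. So there is no ``paper's own proof'' to compare against; your argument supplies what the paper outsources. The approach in the cited references (e.g., Ethier--Kurtz) proceeds more directly through the mean value inequality for Banach-valued $C^1$ functions to control Riemann sums against $\psi(b)-\psi(a)$, avoiding duality altogether. Your Hahn--Banach route is arguably more transparent and requires no Banach-valued mean value estimate, at the modest cost of invoking a nontrivial theorem from functional analysis.
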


\begin{proof}[Proof of Theorem~\ref{thm:general}]
Let
$s\in \RR$.
By Lemma \ref{lem:diff},  assumptions \eqref{assum:2}, \eqref{assum:3} and \eqref{assum:4}, and by  Lemma~\ref{lem:Jakob}, we obtain the result:
\begin{align}
\int_0^{\infty} T_{u} \Big[\ff'(u+s)+L\ff(u+s) \Big] du
&=\int_0^\infty \frac{d}{du} \left[
T_u \ff (u+s)
\right]du=
-\ff(s)\,.
\end{align}
In fact, if $s$ is fixed, the assumptions on $\ff(t)$ only need to hold in $[s,\infty)$.
\end{proof}

We shall give a partial converse to Theorem~\ref{thm:general} by showing that the infinitesimal generator of $T$ is the only operator $L$ that makes \eqref{eq:general} true.
\begin{theorem}\label{thm:unique1}
Let $A$ be a linear operator on a linear space $D(A)\subset Y$ with values in $Y$. Assume that $\ff \colon\RR \to D(A)$ is such that
\begin{align}
&\label{assum:1'} \mbox{$t\mapsto \ff(t)$ is differentiable in $(Y,||\cdot||)$,}\\
&\label{assum:2'} \mbox{$t\mapsto\ff'(t)$ is continuous in $(Y,||\cdot||)$,}\\
&\label{assum:3'} \mbox{$t\mapsto A\ff(t)$ is continuous in $(Y,||\cdot||)$,}\\
&\label{assum:4'} \mbox{$t\mapsto\ff(t)$ has compact support in $\RR$,}
\end{align}
\begin{align}\label{eq:unique1}
\int_s^{\infty} T_{u-s} \Big[\ff'(u)+A\ff(u) \Big] du=-\ff(s)\,,\qquad s\in \RR\,.
\end{align}
Then $\ff(t)\in D(L)$ and $L\ff(t) = A \ff(t)$ for all $t\in\RR$.
\end{theorem}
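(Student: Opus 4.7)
The plan is to establish directly that $\lim_{h\to 0^+}(T_h\ff(s) - \ff(s))/h$ exists in $(Y,\|\cdot\|)$ and equals $A\ff(s)$. Since the existence of this right-hand limit is precisely the definition of belonging to $D(L)$, this will yield both $\ff(s)\in D(L)$ and $L\ff(s) = A\ff(s)$ simultaneously.

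Set $g(u) := \ff'(u) + A\ff(u)$; by \eqref{assum:1'}--\eqref{assum:4'}, $g\colon\RR\to Y$ is continuous and compactly supported. The assumption \eqref{eq:unique1}, after the substitution $v = u - s$, reads $\ff(s) = -\int_0^\infty T_v g(v+s)\,dv$. Writing the same identity at $s+h$ and applying the bounded operator $T_h$ inside the integral, using $T_h T_v = T_{v+h}$, one obtains
\begin{align*}
T_h\ff(s+h) &= -\int_0^\infty T_{v+h} g(v+s+h)\,dv = -\int_h^\infty T_w g(w+s)\,dw \\
&= \ff(s) + \int_0^h T_v g(v+s)\,dv,
\end{align*}
where the last step splits the integral representing $-\ff(s)$ at $v=h$.

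Rearranging and adding $T_h\ff(s) - T_h\ff(s+h)$ to both sides yields the key difference-quotient identity
\[
T_h\ff(s) - \ff(s) \;=\; T_h\bigl[\ff(s) - \ff(s+h)\bigr] \;+\; \int_0^h T_v g(v+s)\,dv.
\]
Dividing by $h$ and letting $h\to 0^+$, the second term tends to $T_0 g(s) = g(s) = \ff'(s) + A\ff(s)$ by continuity of $v \mapsto T_v g(v+s)$ at $v=0$. For the first term, \eqref{assum:1'} gives $-(\ff(s+h)-\ff(s))/h \to -\ff'(s)$ in $Y$; combining $\|T_h\| \le Me^{\omega h}$ with the strong continuity $T_h y \to y$ for each fixed $y\in Y$ shows that $T_h$ applied to a norm-convergent net has the expected limit, so $T_h\bigl[-(\ff(s+h) - \ff(s))/h\bigr] \to -\ff'(s)$. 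Adding the two contributions, the difference quotient converges to $-\ff'(s) + \ff'(s) + A\ff(s) = A\ff(s)$, as required.

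The main obstacle, if one can call it that, is purely bookkeeping: the termwise passage under $T_h$ relies only on strong continuity and local boundedness of $(T_t)$, both standard. The real engine is the identity $T_h\ff(s+h) = \ff(s) + \int_0^h T_v g(v+s)\,dv$, which cleanly extracts $g = \ff' + A\ff$ from the integral representation and turns the global equation \eqref{eq:unique1} into a pointwise identification of $L\ff(s)$ with $A\ff(s)$.
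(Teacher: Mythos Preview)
Your proof is correct and is essentially the same as the paper's: both obtain the identity $T_h\ff(s+h)-\ff(s)=\int_0^h T_v\,g(v+s)\,dv$ (equivalently $\int_t^{t+h} T_{u-t}[\ff'(u)+A\ff(u)]\,du=T_h\ff(t+h)-\ff(t)$) by subtracting \eqref{eq:unique1} at $s$ and at $s+h$ after applying $T_h$, then rearrange to isolate $(T_h-I)\ff(s)/h$ and pass to the limit using strong continuity and local boundedness of the semigroup.
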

\begin{proof}
Let $t\in\RR$ and $h>0$. By \eqref{eq:unique1},
\begin{align*}
\int_{t+h}^{\infty} T_{u-t}  \Big[\ff'(u)+A\ff(u) \Big] du&=\int_{t+h}^{\infty} T_{u-(t+h)}T_{h}  \Big[\ff'(u)+A\ff(u) \Big] du\\
&=-T_h\ff(t+h)\,.
\end{align*}
Subtracting this from \eqref{eq:unique1} with $s=t$ we get
\begin{align*}
\int_t^{t+h} T_{u-t} \Big[\ff'(u)+A\ff(u) \Big] du=T_h\ff(t+h)-\ff(t)\,.
\end{align*}
We get
\begin{align*}
\left(\frac{T_h-I}{h} \right)\ff(t) = \frac{1}{h} \int_t^{t+h} T_{u-t} \Big[\ff'(u)+A\ff(u) \Big] du - T_h\left(\frac{\ff(t+h)-\ff(t)}{h} \right)\,.
\end{align*}
By \eqref{assum:1'}--\eqref{assum:3'} the limit on the right hand side exists as $h\to 0^+$ and equals
\begin{align*}
L\ff(t)= T_0\left( \ff'(t) + A\ff(t) \right) - T_0 \ff'(t)=A\ff(t)\,.
\end{align*}
In fact, the assumptions \eqref{assum:1'}--\eqref{eq:unique1} only need to hold
on $[t,t+\varepsilon)$, $\varepsilon>0$.
\end{proof}

\begin{remark}\label{rem:ef0}
We call $\ff$ satisfying \eqref{assum:1'}--\eqref{assum:4'} a {\it path for $A$}.
Define
$$D(A,T)=\{\ff(t)\colon \mbox{such that }  t\in \RR,\,\mbox{and}\, \ff \mbox{ is a path for } A \mbox{ satisfying } \eqref{eq:unique1}\}.$$
If $A$ is the infinitesimal generator of a strongly continuous semigroup $S=(S_t)_{t\geq 0}$ on $Y$ and $D(A,T)$ contains the cores of $L$ and $A$, then $L \equiv A$ and $T\equiv S$. Indeed, by the comment following Theorem \ref{thm:general}, for the infinitesimal generator $L$ of $T=(T_t)_{t\geq 0}$ we have $D(L,T)=D(L)$.
Theorem \ref{thm:unique1} means that $D(A,T)\subseteq D(A)\cap D(L)$,
and $A=L$ on $D(A,T)$. This identifies $L$ with $A$ and $T$ with $S$.
\end{remark}

We now focus on
L\'evy semigroups
discussed in the Introduction.
\begin{proof}[Proof of \eqref{eq:fsLp}]
Recall that $C_c^{\infty}(\RR^d)\subset C_0^2(\RR^d)\subset D(L)$.
We shall verify the assumptions of Theorem \ref{thm:general} for $\ff(t)= \phi(t,\cdot)$. It suffices to justify \eqref{assum:3}.
Recall that \eqref{formula:gen_Levy} holds for $f\in C_0^2(\RR^d)$ and $L$ is continuous from $C_0^2(\RR^d)$ to $C_0(\RR^d)$ \cite[p.~211]{MR1739520}.
We note that $t\mapsto \phi(t,\cdot)$ is continuous in $C_0^2(\RR^d)$.
Therefore
$t\mapsto L\phi(t,\cdot)$ is continuous in $(C_0(\RR^d), ||\cdot ||_{\infty})$.
By Theorem \ref{thm:general},
\begin{align*}
-\ff(s) &= \int_s^{\infty}\!\! P_{u-s} \Big[\ff'(u)+L\ff(u) \Big] du
\end{align*}
in $C_0(\RR^d)$.
Recall that
the Riemann
integrals
converge in norm.
Evaluation at a point is continuous on $(C_0(\RR^d), ||\cdot ||_{\infty})$, therefore the above identity holds pointwise, i.e. \eqref{eq:fsLp} holds. We note in passing that the integral in \eqref{eq:fsLp} may be interpreted as 
Lebesgue integral on $\RR\times \Rd$.
\end{proof}
\begin{theorem}[Uniqueness]
Let $C_c^{\infty}(\RR^d)$ be a core of a closed linear operator $A$ with domain $D(A)\subset (C_0(\RR^d), ||\cdot ||_{\infty})$.
If for all $s\in\RR$, $x\in\RR^d$ and $\phi\in C_c^{\infty}(\RR\times\RR^d)$,
\begin{align}\label{eq:unique2}
\int_s^{\infty} \int_{\RR^d} \Big[\partial_u \phi(u,x+z) + A\phi(u,x+z) \Big]p_{u-s}(dz)du = -\phi(s,x)\,,
\end{align}
then $A\equiv L$.
\end{theorem}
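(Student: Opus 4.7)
The plan is to reduce the pointwise hypothesis \eqref{eq:unique2} to the Banach-space identity \eqref{eq:unique1} of Theorem~\ref{thm:unique1}, and then invoke the fact that $C_c^\infty(\Rd)$ is a core of both $A$ and the L\'evy generator $L$.

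First, I would test \eqref{eq:unique2} against product functions $\phi(u,x)=f(u)\psi(x)$ with $f\in C_c^\infty(\RR)$ and $\psi\in C_c^\infty(\Rd)$. Rewriting $\int_{\Rd} g(x+z)\,p_{u-s}(dz)=P_{u-s}g(x)$, the hypothesis becomes, pointwise in $x$,
\[
\int_s^\infty \bigl[f'(u)\,P_{u-s}\psi(x)+f(u)\,P_{u-s}(A\psi)(x)\bigr]\,du=-f(s)\psi(x).
\]
Since $\psi,A\psi\in C_0(\Rd)$ and $(P_t)_{t\geq 0}$ is strongly continuous on $C_0(\Rd)$, the integrand $u\mapsto P_{u-s}[f'(u)\psi+f(u)A\psi]$ is continuous in $(C_0(\Rd),\|\cdot\|_\infty)$ with compact $u$-support, so its Banach-valued Riemann integral exists and coincides pointwise with the scalar integral above. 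Hence the identity lifts to an equality in $C_0(\Rd)$, which is exactly \eqref{eq:unique1} applied to the path $\ff(u)=f(u)\psi$ with $T=P$.

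Next, the hypotheses \eqref{assum:1'}--\eqref{assum:4'} are immediate for this $\ff$: it takes values in $C_c^\infty(\Rd)\subset D(A)$, has compact support, and both $\ff'(u)=f'(u)\psi$ and $A\ff(u)=f(u)A\psi$ are continuous in $C_0(\Rd)$ as functions of $u$. Theorem~\ref{thm:unique1} therefore yields $\ff(t)\in D(L)$ and $L\ff(t)=A\ff(t)$ for every $t$; choosing $t_0$ with $f(t_0)\ne 0$ gives $\psi\in D(L)$ and $L\psi=A\psi$. As $\psi\in C_c^\infty(\Rd)$ was arbitrary, $A$ and $L$ agree on $C_c^\infty(\Rd)$.

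To finish I would invoke the standard fact that $C_c^\infty(\Rd)$ is a core of the L\'evy generator $L$ on $C_0(\Rd)$, cf.\ \cite{MR1739520}. Combined with the hypothesis that $C_c^\infty(\Rd)$ is a core of $A$, this gives $A=\overline{A|_{C_c^\infty}}=\overline{L|_{C_c^\infty}}=L$ in the graph norm, hence $A\equiv L$. The main obstacle, and the reason for restricting to product-form test functions, is the continuity hypothesis \eqref{assum:3'}: for a general $\phi\in C_c^\infty(\RR\times\Rd)$ the map $u\mapsto A\phi(u,\cdot)$ need not be continuous in $C_0(\Rd)$ without extra information on $A$ beyond closedness, whereas the product-form reduction bypasses this while still probing $A$ on the full core $C_c^\infty(\Rd)$.
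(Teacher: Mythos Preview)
Your proof is correct and follows essentially the same route as the paper: test the hypothesis on product functions $\phi(u,x)=f(u)\psi(x)$, lift the pointwise identity to the Banach-space identity \eqref{eq:unique1} via continuity of point evaluations on $C_0(\Rd)$, apply Theorem~\ref{thm:unique1}, and conclude using that $C_c^\infty(\Rd)$ is a core of both $A$ and $L$. Your write-up is in fact more careful than the paper's on two points: you take $f\in C_c^\infty(\RR)$ so that $f\otimes\psi$ genuinely lies in $C_c^\infty(\RR\times\Rd)$ as required by the hypothesis, and you spell out why the core property of $C_c^\infty(\Rd)$ for $L$ (together with the assumed core property for $A$) forces $A\equiv L$.
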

\begin{proof}
For $\varphi\in C_c^{\infty}(\Rd)$ and $f\in C_c^1(\Rd)$ we let $\ff(t)=f(t)\varphi$. Then $\ff$ is a path for $A$ and 
$\zeta(t):=\int_t^{\infty}\!\! P_{u-t} \big[\ff'(u)+A\ff(u) \big]du\in C_0(\Rd)$ converges in norm. By continuity of evaluations  and
\eqref{eq:unique2} with $\phi(t,x)=f(t)\varphi(x)$ we have $\zeta(t)(x)=-\ff(t)(x)$, $t\in\RR$, $x\in\Rd$. By Theorem \ref{thm:unique1}, $A=L$ on the common core $C_c^{\infty}(\Rd)$. This ends the proof.
\end{proof}

\begin{remark}
\rm
If the L\'evy process $\{X_t\}$
has a (transition) density function, i.e. $p_t(dy)=p(t,y)dy$
for $t> 0$,
then \eqref{eq:fsLp} reads as
\begin{align*}
\int_s^{\infty} \int_{\RR^d} p(u-s,z-x)\Big[\partial_u \phi(u,z) + L\phi(u,z) \Big]dzdu = -\phi(s,x).
\end{align*}
\end{remark}

We shall focus on the case when $d=1$ and  $\{X_t\}$ is a subordinator, i.e., a nondecreasing L\'evy process.
The L\'evy measure $\nu$ of $X_t$ is concentrated on $(0,\infty)$.
Since $\int (x\wedge 1)\nu(dx)<\infty$ and $L$ is a closed operator,
\eqref{formula:gen_Levy} may be rearranged: we obtain
$C_0^1(\RR)\subset D(L)$ and
\begin{align*}
Lf(x) = b\, \frac{d f}{dx}(x) + \int_0^{\infty} \Big( f(x+y)-f(x) \Big)\nu(dy)\,,\qquad f\in C_0^1(\RR)\,.
\end{align*}
Here $b\geq 0$ is the drift coefficient. Furthermore, for $f\in C_c^1(\RR)$ we obtain
\begin{align*}
\int_0^{\infty} \Big( f(x+y)-f(x) \Big)\nu(dy)&= \int_0^{\infty} \int_0^y f'(x+z)\,dz\,\nu(dy)\\
&= \int_0^{\infty} f'(x+z) \left(\int_z^{\infty} \nu(dy)\right)dz\,.
\end{align*}
Let $\overline{\nu}(z)=\int_z^{\infty}\nu(dy)$.
We thus have
\begin{align}\label{eq:gen_sub}
Lf(x)= b\,
f'(x) + \int_x^{\infty} f'(z)\, \overline{\nu}(z-x)\,dz\,,\qquad f\in C_c^1(\RR)\,.
\end{align}

\begin{example}
Let $\alpha\in(0,1)$ and $\{X_t\}$ be the $\alpha$-stable subordinator, i.e.,
\begin{align*}
b=0\quad \mbox{and}\quad \nu(dy)= \frac{\alpha}{\Gamma(1-\alpha)} \,y^{-\alpha-1} 1_{y>0}\,dy \,.
\end{align*}
We then see that the generator of $\{X_t\}$ coincides on $C_c^1(\RR)$ with the Weyl
fractional derivative (cf. \eqref{eq:gen_sub_12} for the case $\alpha=1/2$).
The potential operator for $\{X_t\}$ is
the Weyl fractional integral
\begin{align*}
W^{-\alpha}f(x) = \int_0^{\infty} T_t f(x)\,dt = \frac{1}{\Gamma(\alpha)} \int_x^{\infty} f(z)(z-x)^{\alpha-1}\,dz\,,\quad f\in C_c(\RR)\,.
\end{align*}
\end{example}
We note in passing that $-W^{-\alpha} \partial^{\alpha} = I$ (the identity operator) on $C_c^1(\RR)$.
Schr{\"o}dinger perturbations of $W^{-\alpha}$
were discussed in \cite[Example 2 and 3]{MR3000465}. The discussion was facilitated by the fact that the 3G Theorem holds for $(y-x)_+^{\alpha-1}/\Gamma(\alpha)$.

\begin{example}\label{ex:inv_Gauss_gen}
Since 
the inverse Gaussian subordinator is obtained by the Esscher transform  (tempering) 
and time rescaling
of the $1/2$-stable subordinator (cf. \cite{MR2042661}, Sec.~4.4.2),  for $f\in C_c^1(\RR)$ the generator of the inverse Gaussian subordinator is  given by \eqref{eq:giG}.
\end{example}

\begin{remark}\label{r:sp}
\rm
For (signed) $q\colon \RR\times X \to \RR$
we define the Schr\"odinger perturbation $\tp$ of $p$ by $q$ by exactly the same formulas \eqref{def:tp} and \eqref{def:p_n}. We get \eqref{eq:pfv}, \eqref{eq:pf}, Chapman-Kolmogorov,  provided the perturbation series for $|q|$, which gives an upper bound for $\tp$, is finite. Under this condition Lemma~\ref{cor:fst} remains valid, too. For lower bounds of $\tp$ for signed $q$ we refer to \cite{MR2457489,MR3200161}
\end{remark}

\subsection*{Acknowledgment}
We thank Tomasz Jakubowski and Sebastian Sydor for discussions and 
Zhen-Qing Chen for a question motivating Theorem~\ref{thm:unique1}.
We thank the referees for very helpful suggestions, which considerably improved the paper.
Krzysztof Bogdan was partially supported by NCN grant 2012/07/B/ST1/03356. Karol Szczypkowski was partially supported by NCN grant  2011/03/N/ST1/00607.


\begin{thebibliography}{10}

\bibitem{MR2512800}
{\sc Applebaum, D.}
\newblock {\em L\'evy processes and stochastic calculus}, 2~ed., vol.~116 of
  {\em Cambridge Studies in Advanced Mathematics}.
\newblock Cambridge University Press, Cambridge, 2009.

\bibitem{MR2457489}
{\sc Bogdan, K., Hansen, W., and Jakubowski, T.}
\newblock Time-dependent {S}chr\"odinger perturbations of transition densities.
\newblock {\em Studia Math. 189}, 3 (2008), 235--254.

\bibitem{MR3065313}
{\sc Bogdan, K., Hansen, W., and Jakubowski, T.}
\newblock Localization and {S}chr\"odinger perturbations of kernels.
\newblock {\em Potential Anal. 39}, 1 (2013), 13--28.

\bibitem{MR3000465}
{\sc Bogdan, K., Jakubowski, T., and Sydor, S.}
\newblock Estimates of perturbation series for kernels.
\newblock {\em J. Evol. Equ. 12}, 4 (2012), 973--984.

\bibitem{MR3200161}
{\sc Bogdan, K., and Szczypkowski, K.}
\newblock Gaussian estimates for {S}chr\"odinger perturbations.
\newblock {\em Studia Math. 221}, 2 (2014), 151--173.

\bibitem{Yana}
{\sc Butko, Ya.~A.~}
\newblock On fundamental solutions, transition probabilities and fractional derivatives.
\newblock \emph{Science and Education of the Bauman MSTU}, 1 : 42--52, 2015. 
DOI: 10.7463/0115.0754986

\bibitem{MR2042661}
{\sc Cont, R., and Tankov, P.}
\newblock {\em Financial modelling with jump processes}.
\newblock Chapman \& Hall/CRC Financial Mathematics Series. Chapman \&
  Hall/CRC, Boca Raton, FL.

\bibitem{MR1883198}
{\sc Dynkin, E.~B.}
\newblock {\em Diffusions, superdiffusions and partial differential equations},
  vol.~50 of {\em American Mathematical Society Colloquium Publications}.
\newblock American Mathematical Society, Providence, RI, 2002.

\bibitem{MR838085}
{\sc Ethier, S.~N., and Kurtz, T.~G.}
\newblock {\em Markov processes}.
\newblock Wiley Series in Probability and Mathematical Statistics: Probability
  and Mathematical Statistics. John Wiley \& Sons Inc., New York, 1986.
\newblock Characterization and convergence.

\bibitem{Frazier:2014qsa}
{\sc Frazier, M., Nazarov, F., and Verbitsky, I.}
\newblock Global estimates for kernels of {N}eumann series and {G}reen's
  functions.
\newblock {\em J. London Math. Soc. 90}, 3 (2014), 903--918.

\bibitem{MR2403428}
{\sc Grigor{$'$}yan, A., and Hansen, W.}
\newblock Lower estimates for a perturbed {G}reen function.
\newblock {\em J. Anal. Math. 104\/} (2008), 25--58.

\bibitem{2015arXiv150305747G}
{\sc {Grzywny}, T., and {Szczypkowski}, K.}
\newblock {Kato classes for {L\'evy} processes}.
\newblock {\em ArXiv e-prints\/} (Mar. 2015).

\bibitem{MR2207878}
{\sc Hansen, W.}
\newblock Global comparison of perturbed {G}reen functions.
\newblock {\em Math. Ann. 334}, 3 (2006), 643--678.

\bibitem{MR1873235}
{\sc Jacob, N.}
\newblock {\em Pseudo differential operators and {M}arkov processes. {V}ol.
  {I}}.
\newblock Imperial College Press, London, 2001.
\newblock Fourier analysis and semigroups.

\bibitem{MR2507445}
{\sc Jakubowski, T.}
\newblock On combinatorics of {S}chr\"odinger perturbations.
\newblock {\em Potential Anal. 31}, 1 (2009), 45--55.

\bibitem{MR1811962}
{\sc R{\"a}biger, F., Schnaubelt, R., Rhandi, A., and Voigt, J.}
\newblock Non-autonomous {M}iyadera perturbations.
\newblock {\em Differential Integral Equations 13}, 1-3 (2000), 341--368.

\bibitem{MR1739520}
{\sc Sato, K.-i.}
\newblock {\em L\'evy processes and infinitely divisible distributions},
  vol.~68 of {\em Cambridge Studies in Advanced Mathematics}.
\newblock Cambridge University Press, Cambridge, 1999.
\newblock Translated from the 1990 Japanese original, Revised by the author.

\bibitem{MR617913}
{\sc Yosida, K.}
\newblock {\em Functional analysis}, sixth~ed., vol.~123 of {\em Grundlehren
  der Mathematischen Wissenschaften [Fundamental Principles of Mathematical
  Sciences]}.
\newblock Springer-Verlag, Berlin, 1980.

\bibitem{MR1488344}
{\sc Zhang, Q.~S.}
\newblock On a parabolic equation with a singular lower order term. {II}. {T}he
  {G}aussian bounds.
\newblock {\em Indiana Univ. Math. J. 46}, 3 (1997), 989--1020.

\bibitem{MR1978999}
{\sc Zhang, Q.~S.}
\newblock A sharp comparison result concerning {S}chr\"odinger heat kernels.
\newblock {\em Bull. London Math. Soc. 35}, 4 (2003), 461--472.

\end{thebibliography}
\end{document}